\numberwithin{equation}{section}
\numberwithin{figure}{section}
\theoremstyle{plain}
\newtheorem{thm}{\protect\theoremname}[section]
\theoremstyle{remark}
\newtheorem{rem}[thm]{\protect\remarkname}
\theoremstyle{definition}
\newtheorem{defn}[thm]{\protect\definitionname}
\theoremstyle{plain}
\newtheorem{lem}[thm]{\protect\lemmaname}
\theoremstyle{plain}
\newtheorem{fact}[thm]{\protect\factname}
\providecommand{\definitionname}{Definition}
\providecommand{\factname}{Fact}
\providecommand{\lemmaname}{Lemma}
\providecommand{\remarkname}{Remark}
\providecommand{\theoremname}{Theorem}
\begin{document}
\title[Eigenvalue estimates for the poly-Laplace]{Eigenvalue estimates for the poly-Laplace operator on lattice subgraphs}
\author{Bobo Hua and Ruowei Li}
\address{Bobo Hua: School of Mathematical Sciences, LMNS, Fudan University,
Shanghai 200433, China; Shanghai Center for Mathematical Sciences,
Jiangwan Campus, Fudan University, No. 2005 Songhu Road, Shanghai
200438, China.}
\email{bobohua@fudan.edu.cn}
\address{Ruowei Li: School of Mathematical Sciences, Fudan University, Shanghai
200433, China; Shanghai Center for Mathematical Sciences, Jiangwan
Campus, Fudan University, No. 2005 Songhu Road, Shanghai 200438, China;
MPI MiS Leipzig, 04103 Leipzig, Germany.}
\email{rwli19@fudan.edu.cn}
\begin{abstract}
We introduce the discrete poly-Laplace operator on a subgraph with
Dirichlet boundary condition. We obtain upper and lower bounds for
the sum of the first $k$ Dirichlet eigenvalues of the poly-Laplace
operators on a finite subgraph of lattice graph $\mathbb{Z}^{d}$
extending classical results of Li-Yau and Kröger. Moreover, we prove
that the Dirichlet $2l$-order poly-Laplace eigenvalues are at least
as large as the squares of the Dirichlet $l$-order poly-Laplace eigenvalues.
\end{abstract}

\maketitle
\tableofcontents{}

\section{Introduction}

Let $\Omega$ be a connected bounded domain with a smooth boundary
in a $d$-dimensional Euclidean space $\mathbb{R}^{d}$ and let $\nu$
be the outward unit normal vector field of $\partial\Omega$. Denote
by $\Delta$ the Laplace operator on $\mathbb{R}^{d}$. Solutions
of $\Delta f=0$ on $\Omega$ are the classical harmonic functions
that describe the equilibrium state of an elastic homogeneous membrane.
Solutions of $\Delta^{2}f=0$ are called biharmonic, and they model
equilibria of homogeneous plates. Similarly, solutions of $\Delta^{l}f=0$,
where $l\in\mathbb{N}_{+}$, are called poly-harmonic. One naturally
considers the eigenvalue problem
\begin{equation}
\left\{ \begin{aligned} & (-\Delta)^{l}f=\lambda^{l}f\quad\quad\text{in}\ \Omega,\\
 & f|_{\partial\Omega}=\frac{\partial f}{\partial\nu}|_{\partial\Omega}=\ldots=\frac{\partial^{l-1}f}{\partial^{l-1}\nu}|_{\partial\Omega}=0.
\end{aligned}
\right.\label{eq: eigenvalue problem in R^d}
\end{equation}
We can order the eigenvalues as 
\[
0<\lambda_{1}^{l}(\Omega)\leqslant\lambda_{2}^{l}(\Omega)\leqslant\ldots\leqslant\lambda_{k}^{l}(\Omega)\leqslant\ldots.
\]
For the case $l=1$, the eigenvalue problem (\ref{eq: eigenvalue problem in R^d})
is called a fixed membrane problem which has been well-studied. Beginning
with the work of Weyl in 1912 \cite{Weyl1912}, he states that as
$k\rightarrow\infty$, 
\[
\lambda_{k}^{1}(\Omega)\sim C_{d}(\frac{k}{V(\Omega)})^{\frac{2}{d}},
\]
where $V(\Omega)$ is the volume of the domain $\Omega$ and $C_{d}=(2\pi)^{2}V_{d}^{-\frac{2}{d}}$
with $V_{d}$ being the volume of the unit ball in $\mathbb{R}^{d}$. 

For a planar domain $\Omega$ that tiles $\mathbb{R}^{2}$, Pólya
\cite{Polya1961} provided the lower bound estimate for $d=2$ as
\[
\lambda_{k}^{1}(\Omega)\geqslant C_{d}(\frac{k}{V(\Omega)})^{\frac{2}{d}}.
\]
His proof applies to any dimension $d\geqslant2$, and he conjectured
that this estimate holds for all bounded domains in $\mathbb{R}^{d}$.
A partial solution to this conjecture was obtained by Lieb \cite{Lieb1980}
\[
\lambda_{k}^{1}(\Omega)\geqslant D_{d}(\frac{k}{V(\Omega)})^{\frac{2}{d}},
\]
where the constant $D_{d}<C_{d}$ and is proportional to $C_{d}$.
Later Li-Yau \cite{Li1983} and Berezin \cite{Berezin1972} derived
the sharp estimate known as the Berezin-Li-Yau inequality
\[
\frac{1}{k}\sum_{i=1}^{k}\lambda_{i}^{1}(\Omega)\geqslant\frac{d}{d+2}C_{d}(\frac{k}{V(\Omega)})^{\frac{2}{d}}.
\]
Then Li and Yau\textquoteright s estimate on the sum of the eigenvalues
was improved by Melas \cite{Melas2003} whose result was further improved
in \cite{Kovarik2009,Yolcu2012}. The upper bound estimate on the
sum of the eigenvalues was proved by Kröger \cite{Kroger1994} in
1994, which includes an extra term depending on the geometry of $\Omega$.
Moreover in the same spirit, Kröger \cite{Kroger1992,Kroger1994}
obtained upper and lower bounds for the sum of the first $k$ Neumann
eigenvalues. Finally, Strichartz \cite{Strichartz1996} developed
a general framework for obtaining upper and lower bounds for the sum
of the first $k$ Dirichlet and Neumann eigenvalues. 

For $l\geqslant2$, poly-Laplace operators have various applications
in physics. For example, for $l=2$ the eigenvalue problem (\ref{eq: eigenvalue problem in R^d})
is called a clamped plate problem \cite[Chapter 11]{Henrot2006}.
Agmon \cite{Agmon1965} and Pleijel \cite{Pleijel1950} established
the Weyl\textquoteright s asymptotics for the eigenvalues of the clamped
plate problem. And Birman and Solomyak \cite{Birman1979,Birman1980}
generalized the Weyl\textquoteright s asymptotics to arbitrary $l$.
Later, Levine and Protter \cite{Levine1985} extended the Berezin-Li-Yau
inequality for arbitrary order $l\geqslant1$, 
\begin{equation}
\frac{1}{k}\sum_{i=1}^{k}\lambda_{i}^{l}(\Omega)\geqslant\frac{d}{d+2l}C_{d}^{l}(\frac{k}{V(\Omega)})^{\frac{2l}{d}}+\frac{d}{d+2l}C_{d}^{l}(\frac{k}{V(\Omega)})^{\frac{2(l-1)}{d}},\label{eq: Berezin-Li-Yau inequality for l=00005Cgeq1}
\end{equation}
which is sharp in the sense of Weyl\textquoteright s asymptotics.
Later \cite{Chen2013} gave a significant improvement of (\ref{eq: Berezin-Li-Yau inequality for l=00005Cgeq1})
by adding $l$ lower-order terms than $k^{2l/d}$ to its right-hand
side, and it was further improved by \cite{Wei2016,Yildirim2014}.
For the upper bound of the sum of the first $k$-th eigenvalues of
the Dirichlet bi-Laplace operator, Cheng and Wei in \cite{Cheng2013}
prove that there exists a constant $r_{0}>0$ such that for $k\geqslant V(\Omega)r_{0}^{n}$,
\begin{equation}
\frac{1}{k}\sum_{j=1}^{k}\lambda_{j}^{2}(\Omega)\leqslant\frac{1+\frac{4(d+4)(d^{2}+2d+6)}{d+2}\frac{V(\Omega_{r_{0}})}{V(\Omega)}}{\left(1-\frac{V(\Omega_{r_{0}})}{V(\Omega)}\right)^{\frac{d+4}{d}}}\frac{dC_{d}^{2}}{d+4}(\frac{k}{V(\Omega)})^{\frac{4}{d}}.\label{eq: upper bound in continuous case}
\end{equation}
where $\Omega_{r}\coloneqq\left\{ x\in\Omega:\text{dist}(x,\partial\Omega)<r^{-1}\right\} $.
The estimate is sharp in the sense of the Weyl\textquoteright s asymptotics
since $V(\Omega_{r_{0}})\to0$ as $r_{0}\to+\infty$. The eigenvalue
problem (\ref{eq: eigenvalue problem in R^d}) for arbitrary order
case has recently received more attention. For instance, look at \cite{Jost2011}
for universal bounds for eigenvalues, \cite{Forster2008,Laptev2000,Netrusov1996}
for Lieb-Thirring type inequalities and \cite{Gazzola1991} for positiving
preserving problems.

In recent years, increasing attention has been paid to the analysis
of graphs. The graph Laplacian has been extensively studied in the
literature, e.g. \cite{Grone1990,Guattery2000,Hua2014,Bauer2022}.
Hirschler and Woess in \cite{Hirschler2021} studied bi-Laplace equations
for directed networks and Markov chains. And Duffin considered the
difference equations of the poly-Laplace type \cite{Duffin1958}.
Other works \cite{Feng2012,Schweiger2021,Khalkhuzhaev2019} mainly
focus on the numerical framework of bi-Laplace. However, there are
no references available studies on the eigenvalue problem of the discrete
poly-Laplace operators. The motivation comes from studying the discretization
of the corresponding physical models and providing theoretical results
for future numerical calculations and algorithms. In this paper, we
first introduce the definition of the poly-Laplace operator on a locally
finite connected graph $G$ with Dirichlet boundary condition, and
moreover, we prove the upper and lower estimates for the sum of the
first $k$ Dirichlet eigenvalues of poly-Laplace operators on a lattice
graph.

Recall the setting of graphs. A locally finite, simple, undirected
and connected graph $G=(V,E)$ consists of the set of vertices $V$
and the set of edges $E$. In the following $x\in G$ means that $x\in V$,
and let $|G|$ be the number of vertices in $G$. Two vertices $x,y$
are called neighbors, denoted by $x\sim y$, if there exists $e\in E$
connecting $x$ and $y$. The degree of $x\in V$ is defined as $\text{deg}(x)\coloneqq|\left\{ y:y\sim x\right\} |.$
The graph distance on $G$ is defined by 
\[
d(x,y):=\inf\{n|x=z_{0}\sim\dots\sim z_{n}=y\}.
\]
Define balls $B(x,r)=\{y\in V:d(x,y)\leqslant r\}$ on $G$. A subgraph
$G_{1}=(V_{1},E_{1})$ of $G$ means $V_{1}\subset V$ and $E_{1}\subset E$.
For a subgraph $\Omega$ of $G$, recall the notion of the boundary
$\delta\Omega$
\[
\delta\Omega\coloneqq\left\{ y\in V\backslash\Omega:\exists x\in\Omega,\text{s.t. }y\sim x\right\} .
\]
Denote $\delta\Omega$ by $\delta_{1}\Omega$, and for a positive
integer $l$ define the $l$-th layer boundary as
\[
\delta_{l}\Omega\coloneqq\delta(\Omega\cup\delta_{1}\Omega\cup\ldots\cup\delta_{l-1}\Omega).
\]
Recall that the graph Laplace operator is defined for any $f:V\rightarrow\mathbb{C}$
as
\[
\Delta f(x)=\sum_{y\sim x}(f(y)-f(x)).
\]
For a positive integer $l$, the poly-Laplace operator can be regarded
as iterations of the Laplace operator defined inductively 
\[
\Delta^{l}=\underset{l-\text{th}}{\underbrace{\Delta\circ\Delta\circ\cdots\circ\Delta}}.
\]
For a finite graph $G=(V,E)$ with $V=\left\{ x_{1},\cdots,x_{n}\right\} $,
the Laplace matrix is defined as $\Delta=A-D$, where $D=\text{diag}(\text{deg}(x_{1}),\cdots,\text{deg}(x_{n}))$
is the degree matrix and $A$ is the adjacency matrix whose element
$A_{ij}$ is one when there is an edge from vertex $x_{i}$ to vertex
$x_{j}$, and zero when there is no edge. For any $f:V\rightarrow\mathbb{C}$,
\begin{align*}
(-\Delta)^{l}f(x)=(D-A)^{l}f(x) & =\sum_{m=0}^{l}\left(\begin{array}{c}
l\\
m
\end{array}\right)D^{l-m}(-1)^{m}A^{m}f(x)\\
 & \eqqcolon\underset{y\in B(x,l)}{\sum}a_{xy}^{l}f(y),
\end{align*}
where 
\begin{equation}
a_{xy}^{l}\coloneqq\sum_{m=0}^{l}\left(\begin{array}{c}
l\\
m
\end{array}\right)(-1)^{m}(\text{deg}(y))^{l-m}p_{m}(x,y)\label{eq:a_xy}
\end{equation}
and $p_{m}(x,y)=\sharp\left\{ \text{paths }x=z_{0}\sim\dots\sim z_{m}=y\right\} $
is the number of paths of length $m$ from vertex $x$ to vertex $y$. 

Given a finite subgraph $\Omega\subseteq G$, we introduce the poly-Laplace
operator with Dirichlet boundary, denoted by $\Delta_{\Omega}^{l,\mathcal{D}}$,
which is defined as
\[
\Delta_{\Omega}^{l,\mathcal{D}}f\coloneqq\Delta^{l}f^{*}|_{\Omega}.
\]
Here $f:\Omega\rightarrow\mathbb{C}$, and $f^{*}:G\rightarrow\mathbb{C}$
is the zero extension of $u$. One readily sees that $(-1)^{l}\Delta_{\Omega}^{l,\mathcal{D}}$
is a self-adjoint and positive definite operator on $\Omega$. The
Dirichlet poly-Laplace eigenvalue problem is given by 
\begin{equation}
(-1)^{l}\Delta_{\Omega}^{l,\mathcal{D}}f(x)=\lambda^{l}f(x)\;\text{in \ensuremath{\Omega}}.\label{eq: main poly-laplace Dirichlet eigenvalue problem}
\end{equation}
There are $|\Omega|$ corresponding real eigenvalues: 
\[
0<\lambda_{1}^{l}(\Omega)\leqslant\lambda_{2}^{l}(\Omega)\leqslant\ldots\leqslant\lambda_{|\Omega|}^{l}(\Omega).
\]
See subsection \ref{subsec: Dirichlet poly-Laplace} for more properties
of this operator. 

Consider the $d$-dimensional integer lattice $\mathbb{Z}^{d}$ as
a graph. We study the Dirichlet poly-Laplace problem on a finite subgraph
$\Omega$ of $\mathbb{Z}^{d}$ using the Fourier transform. In 2022,
Bauer and Lippner \cite{Bauer2022} first studied the graph Laplacian
eigenvalues with Dirichlet boundary condition on $\Omega$ of $\mathbb{Z}^{d}$.
Later Wang \cite{Wang2023} introduced the Dirichlet problems of graph
fractional Laplacian and developed a framework for estimating the
Dirichlet eigenvalues of fractional Laplacian. For the average of
the first $k$ Dirichlet eigenvalues, they derived the upper estimate
in the spirit of Kröger \cite{Kroger1994} and the lower estimate
in the spirit of Li-Yau \cite{Li1983}. In this paper, we follow the
proof strategies in \cite{Kroger1994,Li1983,Bauer2022,Wang2023}to
derive the upper and lower bounds for the Dirichlet eigenvalues of
the discrete poly-Laplace operator. 

The following are the main results of this paper.
\begin{thm}
\label{thm: main thm}For a positive integer $l$, consider the Dirichlet
poly-Laplace eigenvalue problem (\ref{eq: main poly-laplace Dirichlet eigenvalue problem})
on a finite subgraph $\Omega$ of $\mathbb{Z}^{d}$. Then

$(a)$ for all $1\leqslant k\leqslant\min\{1,\frac{V_{d}}{2^{d}}\}|\Omega|$,
we have the upper bound estimate 
\[
\frac{1}{k}\sum_{j=1}^{k}\lambda_{j}^{l}(\Omega)\leqslant(2\pi)^{2l}\frac{d}{d+2l}(\frac{k}{V_{d}|\Omega|})^{\frac{2l}{d}}+\frac{|\partial^{l}\Omega|}{|\Omega|}.
\]
For $1\leqslant k\leqslant\min\{1,\frac{V_{d}}{2^{d+1}}\}|\Omega|$,
there holds 
\[
\lambda_{k+1}^{l}(\Omega)\leqslant(2\pi)^{2l}\frac{d\cdot2^{\frac{d+2l}{d}}}{d+2l}(\frac{k}{V_{d}|\Omega|})^{\frac{2l}{d}}+2\frac{|\partial^{l}\Omega|}{|\Omega|}.
\]
Recall that $V_{d}$ is the volume of the unit ball in $\mathbb{R}^{d}$
and 
\begin{align*}
|\partial^{l}\Omega| & =\underset{x\in\delta\Omega\cup\ldots\cup\delta_{l}\Omega\,}{\sum}\underset{y\in B(x,l)\cap\Omega}{\sum}|a_{xy}^{l}|\\
 & \leqslant4^{l}d^{l}\left(|\delta\Omega|+\ldots+|\delta_{l}\Omega|\right),
\end{align*}
where $a_{xy}^{l}$ is defined by (\ref{eq:a_xy}).

$(b)$ For all $1\leqslant k\leqslant\min\{1,(\frac{\sqrt{6}}{2\pi})^{d}V_{d}\}|\Omega|$,
the lower bound estimate is given as 
\begin{align*}
\lambda_{k}^{l}(\Omega) & \geqslant\frac{1}{k}\sum_{j=1}^{k}\lambda_{j}^{l}(\Omega)\\
 & \geqslant\sum_{m=0}^{l}\left(\begin{array}{c}
l\\
m
\end{array}\right)\left(-\frac{1}{12}\right)^{m}(2\pi)^{2(l+m)}\frac{d}{d+2(l+m)}(\frac{k}{V_{d}|\Omega|})^{\frac{2(l+m)}{d}}>0.
\end{align*}
\end{thm}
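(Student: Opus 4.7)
The plan is to transport the problem to Fourier side via the lattice Fourier transform and combine a Li--Yau-type trace identity (for the lower bound) with Kr\"oger-type trial functions (for the upper bound), adapting the discrete framework of \cite{Bauer2022,Wang2023} to the poly-Laplace symbol. Set $\hat f(\xi) = \sum_{x \in \mathbb{Z}^d} f(x) e^{-i\langle x,\xi\rangle}$ on $\mathbb{T}^d = (-\pi,\pi]^d$; Plancherel gives $\|f\|_{\ell^2}^2 = (2\pi)^{-d}\|\hat f\|_{L^2(\mathbb{T}^d)}^2$, and $(-\Delta)^l$ becomes multiplication by the symbol $h(\xi) := (2\sum_{j=1}^d (1-\cos\xi_j))^l$. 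Extending the Dirichlet eigenfunctions $\{\phi_j\}_{j=1}^{|\Omega|}$ by zero, Parseval applied to the restriction $e^{i\langle\cdot,\xi\rangle}|_\Omega$ yields the pointwise identity $\sum_{j=1}^{|\Omega|} |\hat\phi_j(\xi)|^2 = |\Omega|$. Two elementary trigonometric facts drive everything: $2(1-\cos t)\leq t^2$ and $2(1-\cos t)\geq t^2 - t^4/12$ (for all real $t$), which combined with $\sum_j\xi_j^4\leq |\xi|^4$ sandwich the symbol: $|\xi|^2 - |\xi|^4/12\leq 2\sum_j(1-\cos\xi_j)\leq |\xi|^2$.

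For the lower bound I would write
\begin{equation*}
\sum_{j=1}^k \lambda_j^l(\Omega) = (2\pi)^{-d}\int_{\mathbb{T}^d} h(\xi) F(\xi)\, d\xi,\qquad F(\xi) := \sum_{j=1}^k|\hat\phi_j(\xi)|^2,
\end{equation*}
with $0\leq F\leq |\Omega|$ and $(2\pi)^{-d}\int F = k$. The bathtub principle combined with a symmetric rearrangement step yields
\begin{equation*}
\sum_{j=1}^k\lambda_j^l(\Omega)\geq (2\pi)^{-d}|\Omega|\int_{B(0,R)}(|\xi|^2 - |\xi|^4/12)^l\, d\xi,
\end{equation*}
where $V_d R^d = k(2\pi)^d/|\Omega|$. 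The hypothesis $k\leq (\sqrt 6/(2\pi))^d V_d|\Omega|$ forces $R\leq \sqrt 6$, on which range $|\xi|^2 - |\xi|^4/12$ is non-negative and radially increasing, so the symbol lower bound passes through the rearrangement. Expanding
\begin{equation*}
(|\xi|^2 - |\xi|^4/12)^l = \sum_{m=0}^l\binom{l}{m}\left(-\tfrac{1}{12}\right)^m |\xi|^{2(l+m)}
\end{equation*}
and integrating radially using $\int_{B(0,R)}|\xi|^{2(l+m)}d\xi = dV_d R^{d+2(l+m)}/(d+2(l+m))$ together with $R = 2\pi(k/(V_d|\Omega|))^{1/d}$ reproduces the stated alternating sum.

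For the upper bound I would test with the plane wave $u_\xi(x) := e^{i\langle x,\xi\rangle}\mathbf{1}_\Omega(x)$. A direct computation using the zero-extension built into $\Delta_\Omega^{l,\mathcal{D}}$ gives $\langle u_\xi,(-1)^l\Delta_\Omega^{l,\mathcal{D}} u_\xi\rangle = h(\xi)|\Omega| - E(\xi)$, where the boundary correction $E(\xi)$ couples $\Omega$ to the $l$-layer boundary $\delta_1\Omega\cup\cdots\cup\delta_l\Omega$ via the coefficients in (\ref{eq:a_xy}) and satisfies $|E(\xi)|\leq |\partial^l\Omega|\leq 4^l d^l\sum_{m=1}^l|\delta_m\Omega|$ (using the path-count bound $p_m(x,y)\leq (2d)^m$ and $\sum_m\binom{l}{m}(2d)^l = (4d)^l$). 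Setting $v_\xi := u_\xi - \sum_{j=1}^k\langle u_\xi,\phi_j\rangle\phi_j$, the variational inequality $\lambda_{k+1}^l\|v_\xi\|^2\leq \langle v_\xi,(-1)^l\Delta_\Omega^{l,\mathcal{D}} v_\xi\rangle$ unfolds into a pointwise-in-$\xi$ bound; integrating it over a Euclidean ball $A = B(0,R')\subset\mathbb{T}^d$, using $\int_{\mathbb{T}^d}|\hat\phi_j|^2 = (2\pi)^d$ and the symbol upper bound $h(\xi)\leq |\xi|^{2l}$, and rearranging yields
\begin{equation*}
(2\pi)^d\sum_{j=1}^k\lambda_j^l\leq |\Omega|\int_A h(\xi)\,d\xi + \int_A |E(\xi)|\,d\xi.
\end{equation*}
Choosing $|A| = k(2\pi)^d/|\Omega|$ gives the first estimate in part (a); a parallel derivation dropping $\sum_j\lambda_j^l\int_A|\hat\phi_j|^2\geq 0$ and choosing $|A| = 2k(2\pi)^d/|\Omega|$ gives the second. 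The range conditions $k\leq V_d|\Omega|/2^d$ (resp.\ $V_d|\Omega|/2^{d+1}$) are precisely what ensure $R'\leq \pi$, i.e.\ $A\subset\mathbb{T}^d$.

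The hard part is the lower bound's bathtub-plus-rearrangement step: the true sub-level set $\{h\leq s\}$ is not a Euclidean ball, and one must argue via symmetric rearrangement that $\int_{\{h\leq s\}} h\, d\xi$ is bounded below by $\int_{B(0,R)}(|\xi|^2 - |\xi|^4/12)^l d\xi$; this requires the restriction $R\leq \sqrt 6$ simultaneously to guarantee non-negativity of $|\xi|^2-|\xi|^4/12$, to guarantee the polynomial is radially increasing (so rearrangement works in the direction that yields a \emph{lower} bound), and to permit raising to the $l$-th power. A secondary technical point is the combinatorial control of the boundary correction $E(\xi)$ via (\ref{eq:a_xy}) and the path counts on $\mathbb{Z}^d$, which must be organized so that the single factor $|\partial^l\Omega|/|\Omega|$ absorbs the contributions of all $l$ layers $\delta_m\Omega$.
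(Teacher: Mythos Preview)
Your proposal is correct and follows essentially the same route as the paper: Kr\"oger-type plane-wave test functions integrated over a ball for (a), and a Li--Yau bathtub argument on the Fourier side for (b), with the same trigonometric bounds $t^{2}-t^{4}/12\leqslant 2(1-\cos t)\leqslant t^{2}$ driving both directions. The only notable presentational difference is that where you invoke ``bathtub plus symmetric rearrangement'' and flag as the hard part the fact that the true sub-level sets of $h$ are not balls, the paper instead first constructs an explicit radially symmetric minorant $\varphi(z)=H(|z|)$ for $|z|\leqslant\sqrt{6}$, capped at $\varphi\equiv H(\sqrt{6})=3^{l}$ for $|z|>\sqrt{6}$ (the cap is needed so that $\varphi\leqslant(\Phi)^{l}$ holds on all of $[-\pi,\pi]^{d}$, using monotonicity of $\Phi$ along half-lines from the origin), and then applies the bathtub principle directly to $\varphi$, whose sub-level sets \emph{are} balls; this sidesteps the rearrangement step entirely and makes transparent why the restriction $R\leqslant\sqrt{6}$ is exactly what is needed.
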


\begin{rem}
(1) Compared with the continuous upper bound estimates (\ref{eq: upper bound in continuous case})
and lower bound estimates (\ref{eq: Berezin-Li-Yau inequality for l=00005Cgeq1}),
the terms $(2\pi)^{2l}\frac{d}{d+2l}(\frac{k}{V_{d}|\Omega|})^{\frac{2l}{d}}$
are the same and consistent with the Weyl\textquoteright s asymptotics.
Moreover, the upper bound estimates (\ref{eq: upper bound in continuous case})
also contain a geometric quantity $V(\Omega_{r_{0}})$ that depends
on the domain boundary. 

(2) The boundary term $|\partial^{l}\Omega|$ can be bounded by $4^{l}d^{l}\left(|\delta\Omega|+\ldots+|\delta_{l}\Omega|\right)$.
In particular, for $l=1$ and $l=2$, we can bound $|\partial^{1}\Omega|$
and $|\partial^{2}\Omega|$ by (\ref{eq: |=00005Cpartial^=00007B1=00007D=00005COmega|})
and (\ref{eq: |=00005Cpartial^=00007B2=00007D=00005COmega|}) respectively.
\end{rem}

The ideas behind the proof are as follows. The upper bound estimate
is based on Lemma \ref{lem: upper bound lemma} (was proved by the
Rayleigh-Ritz formula in \cite{Bauer2022}), which gives an estimate
(\ref{eq: upper bound lemma of L}) for eigenvalues of general self-adjoint
and positive semidefinite operators. Let $g(x)=h_{z}(x)=e^{i\langle x,z\rangle}$
and integrate (\ref{eq: upper bound lemma of L}) in a measurable
subset $B$ of $[-\pi,\pi]^{d}$. By the properties of the Fourier
transform we can prove Lemma~\ref{lem: upper bound lemma of poly-Laplace}.
Finally, the upper bound estimate $(a)$ follows from the proper choice
for set $B$.

For the lower bound estimate, the key is to prove Lemma~\ref{lem: lower bound lemma}
which is an adaption of Li and Yau's method \cite{Li1983}. To prove
the lemma, for a real-valued function $F$ with $0\leqslant F\leqslant M$
and $\int_{[-\pi,\pi]^{d}}F(z)dz\geqslant K,$ first construct a radially
symmetric function $\varphi(z):[-\pi,\pi]^{d}\rightarrow\mathbb{C}$
which satisfies $0\leqslant\varphi(z)\leqslant(\Phi(z))^{l}$ and
\[
\int_{[-\pi,\pi]^{d}}(\Phi(z))^{l}F(z)dz\geqslant\int_{[-\pi,\pi]^{d}}\varphi(z)F(z)dz.
\]
Note that $\widetilde{F}=M1_{B_{R}}$ minimizes the integral $\int_{[-\pi,\pi]^{d}}\varphi(z)F(z)dz$,
and gives a lower bound of $\int_{[-\pi,\pi]^{d}}(\Phi(z))^{l}F(z)dz$.
Choose $F(z)=\sum_{j=1}^{k}|\langle\phi_{j}^{l},h_{z}\rangle_{\Omega}|^{2}$,
where $\{\phi_{i}^{l}\}_{1\leqslant i\leqslant|\Omega|}$ are corresponding
eigenfunctions of $(-1)^{l}\Delta_{\Omega}^{l,\mathcal{D}}$, and
$(b)$ is derived from Lemma~\ref{lem: lower bound lemma}. 

Notably, on a graph $G$ without the boundary, the eigenvalues of
$(-\Delta)^{l}$ are actually the $l$-th power of the corresponding
eigenvalues of the Laplace operator $-\Delta$. However, this relationship
generally does not hold for the Dirichlet eigenvalue problems. The
following theorem shows that the Dirichlet eigenvalues of lower-order
poly-Laplace can provide a rough lower bound for the higher-order
poly-Laplace. Therefore, it is necessary to establish a more precise
estimate for the poly-Laplace eigenvalue, as stated in Theorem \ref{thm: main thm}.
\begin{thm}
\label{thm:bi-laplace and laplace}For a finite subgraph $\Omega$
of $G=(V,E)$ and a positive integer $l$, let $\lambda_{k}^{l}$,
$\lambda_{k}^{2l}$ be the $k$-th eigenvalues of the Dirichlet poly-Laplace
$\Delta_{\Omega}^{l,\mathcal{D}}$ and $\Delta_{\Omega}^{2l,\mathcal{D}}$
respectively. Then for $1\leqslant k\leqslant|\Omega|,$
\[
\left(\lambda_{k}^{l}\right)^{2}\leqslant\lambda_{k}^{2l}.
\]
Moreover, if $\Omega$ is a finite subgraph of $\mathbb{Z}^{d}$,
then 
\[
\left(\lambda_{k}^{l}\right)^{2}<\lambda_{k}^{2l}.
\]
\end{thm}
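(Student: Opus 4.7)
The plan is to use the Courant--Fischer variational characterization of the Dirichlet eigenvalues together with Cauchy--Schwarz. Writing $f^{*}$ for the zero extension to $G$ of $f\colon\Omega\to\mathbb{C}$, the Rayleigh quotients of the two operators can be written
\[
R_{l}(f)=\frac{\langle(-1)^{l}\Delta^{l}f^{*},f^{*}\rangle_{G}}{\|f\|_{\Omega}^{2}},\qquad R_{2l}(f)=\frac{\|\Delta^{l}f^{*}\|_{G}^{2}}{\|f\|_{\Omega}^{2}},
\]
the identity for $R_{2l}$ coming from the self-adjointness of $\Delta^{l}$ on $G$. The first step is to establish the pointwise comparison $R_{l}(f)^{2}\leq R_{2l}(f)$ for every nonzero $f$. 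Since $f^{*}$ vanishes outside $\Omega$, two applications of Cauchy--Schwarz give
\[
\langle(-1)^{l}\Delta^{l}f^{*},f^{*}\rangle_{G}=\langle(-1)^{l}\Delta^{l}f^{*}|_{\Omega},f\rangle_{\Omega}\leq\|\Delta^{l}f^{*}|_{\Omega}\|_{\Omega}\,\|f\|_{\Omega}\leq\|\Delta^{l}f^{*}\|_{G}\,\|f\|_{\Omega},
\]
whose square is exactly $R_{l}(f)^{2}\leq R_{2l}(f)$.

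The first conclusion $(\lambda_{k}^{l})^{2}\leq\lambda_{k}^{2l}$ then follows from min-max. Let $\widetilde{S}$ be the span of the first $k$ eigenfunctions of $\Delta_{\Omega}^{2l,\mathcal{D}}$, so $\max_{0\neq f\in\widetilde{S}}R_{2l}(f)=\lambda_{k}^{2l}$. Using $\widetilde{S}$ as a competitor for $\lambda_{k}^{l}$ in the Rayleigh--Ritz formula,
\[
\lambda_{k}^{l}\leq\max_{f\in\widetilde{S}}R_{l}(f)\leq\sqrt{\max_{f\in\widetilde{S}}R_{2l}(f)}=\sqrt{\lambda_{k}^{2l}}.
\]

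The main obstacle will be upgrading this to a strict inequality when $\Omega$ is a finite subgraph of $\mathbb{Z}^{d}$. I plan to argue by contradiction: if $(\lambda_{k}^{l})^{2}=\lambda_{k}^{2l}$, then both Cauchy--Schwarz estimates above would have to be saturated at some nonzero $f_{0}\in\widetilde{S}$. Saturation of the outer estimate forces $\Delta^{l}f_{0}^{*}$ to vanish on $\mathbb{Z}^{d}\setminus\Omega$, and saturation of the inner one then forces $(-1)^{l}\Delta^{l}f_{0}^{*}|_{\Omega}=\beta f_{0}$ for some $\beta\geq 0$. Combining, $(-1)^{l}\Delta^{l}f_{0}^{*}=\beta f_{0}^{*}$ on all of $\mathbb{Z}^{d}$, so the finitely supported nonzero function $f_{0}^{*}$ is an $\ell^{2}$ eigenfunction of the convolution operator $(-1)^{l}\Delta^{l}$ on $\mathbb{Z}^{d}$. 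Passing to the torus via the Fourier transform, this operator becomes multiplication by the non-constant symbol $\Phi(\xi)^{l}$ with $\Phi(\xi)=2\sum_{j=1}^{d}(1-\cos\xi_{j})$, so $\widehat{f_{0}^{*}}$ would be supported on the Lebesgue-null level set $\{\Phi^{l}=\beta\}$. Since $\widehat{f_{0}^{*}}$ is a trigonometric polynomial and a nonzero trigonometric polynomial cannot vanish on a set of positive measure, we conclude $f_{0}^{*}\equiv 0$, contradicting $f_{0}\neq 0$.
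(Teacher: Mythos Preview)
Your proposal is correct and follows essentially the same route as the paper: the paper likewise derives the pointwise inequality $\langle(\Delta_{\Omega}^{l,\mathcal{D}})^{2}f,f\rangle_{\Omega}\leq\langle\Delta_{\Omega}^{2l,\mathcal{D}}f,f\rangle_{\Omega}$ (your ``outer'' estimate) and combines it with Cauchy--Schwarz inside the min--max, and for strictness on $\mathbb{Z}^{d}$ it argues that equality forces $(-\Delta)^{l}f_{0}^{*}=\mu f_{0}^{*}$ on all of $\mathbb{Z}^{d}$ and then invokes the Fourier-side fact that $(-\Delta)^{l}$ has no nonzero $\ell^{2}$ eigenfunctions (your trigonometric-polynomial argument). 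The only cosmetic differences are that the paper applies Cauchy--Schwarz once over $G$ rather than splitting it into an $\Omega$-step and a restriction step, and it runs the min--max over all $k$-dimensional subspaces instead of singling out the span $\widetilde{S}$ of the first $k$ eigenfunctions of $\Delta_{\Omega}^{2l,\mathcal{D}}$ as a competitor.
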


\begin{rem}
\label{rem: bilaplace estimate}(1) We obtain the same estimate $\left(\lambda_{k}^{l}\right)^{2}\leqslant\lambda_{k}^{2l}$
for the continuous settings, see Theorem \ref{thm: continuous laplace and bi-laplace}. 

(2) The strict inequality follows from Lemma \ref{lem:eigenfunction on Z^d},
which states that there is no $\ell^{2}$-eigenfunction of poly-Laplace
on $\mathbb{Z}^{d}$, while the strictness is usually not true on
general graphs. For example, on the subgraph $\Omega=\left\{ v_{1},v_{2}\right\} $
of the $3$-cycle $C_{3}=\left\{ v_{1},v_{2},v_{3}\right\} $, we
have $\left(\lambda_{2}^{1}\right)^{2}=(-3)^{2}=9=\lambda_{2}^{2}$.

(3) On a path graph $[0,n]\subseteq\mathbb{Z}$, we have $\frac{\left(\lambda_{k}^{1}\right)^{2}}{\lambda_{k}^{2}}\to c_{k}<1$
as $n\to+\infty$, see numerical experiments in Appendix. 
\end{rem}

For an infinite graph $G=(V,E)$, the $k$-th eigenvalue of the poly-Laplace
on $G$ is defined as 
\[
\lambda_{k}^{l}(G)=\underset{\begin{array}{c}
M\subseteq C_{0}(V)\\
\text{dim}M=k
\end{array}}{\text{inf}}\underset{\begin{array}{c}
f\in M\\
\|f\|_{2}=1
\end{array}}{\text{max}}\langle(-\Delta)^{l}f,f\rangle_{G}.
\]
And it can be estimated by the exhaustion trick, see Definition \ref{def:exhaustion}.
\begin{thm}
\label{thm:exhaustion}Let $\mathcal{W}=\left\{ {W_{i}}\right\} _{i=1}^{\infty}$
be an exhaustion of infinite graph $G=(V,E)$ with bounded degree
$\underset{x\in V}{\text{sup}}\text{ deg}x<+\infty$, then 
\[
\lambda_{k}^{l}(G)=\underset{i\to+\infty}{lim}\lambda_{k}^{l}(W_{i}).
\]
In particular, for $k=1$ we have 
\[
\left(\lambda_{1}^{1}(G)\right)^{l}=\lambda_{1}^{l}(G)=\underset{i\to+\infty}{lim}\lambda_{1}^{l}(W_{i}).
\]
\end{thm}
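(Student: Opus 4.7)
The plan is to combine the min-max (variational) characterization of $\lambda_k^l$ with the standard exhaustion argument in two steps: first show $\liminf_i \lambda_k^l(W_i) \geq \lambda_k^l(G)$ by monotonicity, then show $\limsup_i \lambda_k^l(W_i) \leq \lambda_k^l(G)$ by approximating a near-optimal $k$-dimensional trial subspace in $C_0(V)$ by functions supported in $W_i$.

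First I would check that for any $f \colon W_i \to \mathbb{C}$ with zero extension $f^* \colon V \to \mathbb{C}$, the Dirichlet form satisfies
\[
\langle (-1)^l\Delta_{W_i}^{l,\mathcal{D}} f,f\rangle_{W_i} = \sum_{x\in W_i}(-\Delta)^l f^*(x)\cdot f^*(x) = \sum_{x\in V}(-\Delta)^l f^*(x)\cdot f^*(x) = \langle(-\Delta)^l f^*,f^*\rangle_G,
\]
since $f^*$ vanishes outside $W_i$. Because $W_i \subseteq W_{i+1}$, zero extension identifies $C_0(W_i)$ isometrically with a subspace of $C_0(W_{i+1})$, and further with a subspace of $C_0(V)$, with the Rayleigh quotient preserved. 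The min-max definition then forces $\lambda_k^l(W_i) \geq \lambda_k^l(W_{i+1}) \geq \lambda_k^l(G)$, so $L \coloneqq \lim_{i\to\infty} \lambda_k^l(W_i)$ exists and satisfies $L \geq \lambda_k^l(G)$.

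For the reverse inequality, fix $\varepsilon > 0$ and pick a $k$-dimensional subspace $M \subseteq C_0(V)$ with $\max_{f\in M,\,\|f\|_2=1}\langle(-\Delta)^l f,f\rangle_G \leq \lambda_k^l(G)+\varepsilon$. Every basis function of $M$ has finite support, so their union $K\subseteq V$ is finite; by the exhaustion property there exists $i_0$ with $K\subseteq W_i$ for all $i\geq i_0$. Viewing $M$ as a $k$-dimensional subspace of the Dirichlet space on $W_i$ and using the identity above, the min-max principle gives $\lambda_k^l(W_i) \leq \lambda_k^l(G)+\varepsilon$ for $i\geq i_0$. Letting $i\to\infty$ and then $\varepsilon\to 0$ yields $L \leq \lambda_k^l(G)$, which completes the first identity.

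For the special case $k=1$, the bounded-degree hypothesis makes $-\Delta$ (and hence $(-\Delta)^l$) a bounded, non-negative, self-adjoint operator on $\ell^2(V)$, and $C_0(V)$ is dense in $\ell^2(V)$. The spectral mapping theorem gives $\sigma((-\Delta)^l)=\{\mu^l:\mu\in\sigma(-\Delta)\}$, and since $t\mapsto t^l$ is increasing on $[0,\infty)$ we obtain $\inf\sigma((-\Delta)^l)=(\inf\sigma(-\Delta))^l$; density identifies each infimum with the corresponding $\lambda_1$ via the min-max formula, giving $\lambda_1^l(G)=(\lambda_1^1(G))^l$. The only genuinely delicate point in the whole argument is verifying that the Dirichlet quadratic form on $W_i$ agrees \emph{exactly} (not up to boundary corrections) with the form on $G$ under zero extension, which is secured by the vertex-sum calculation above; once that is in hand the theorem is a short application of min-max.
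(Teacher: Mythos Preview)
Your proposal is correct and follows essentially the same approach as the paper. The paper's own proof is a single sentence invoking domain monotonicity together with the two min-max definitions (for finite $\Omega$ and for $G$ over $C_0(V)$); your argument simply unpacks that sentence into the two inequalities $L\geq\lambda_k^l(G)$ and $L\leq\lambda_k^l(G)$, and your treatment of the $k=1$ case reproduces what the paper records as a Fact via the spectral mapping theorem. The identity $\langle(-1)^l\Delta_{W_i}^{l,\mathcal{D}}f,f\rangle_{W_i}=\langle(-\Delta)^lf^*,f^*\rangle_G$ that you single out as the ``delicate point'' is indeed the content of the paper's Lemma~\ref{lem: bi-Laplace self-adjoint} (and is used implicitly in the domain-monotonicity lemma), so nothing here deviates from the paper's strategy.
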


\begin{rem}
For $k=1$, the bottom of the spectrum of poly-Laplace $(-\Delta)^{l}$
is the $l$-th power of the bottom of the spectrum of Laplace operator
$-\Delta$.
\end{rem}

The paper is organized as follows. In next section, we first recall
the basic setting of graphs, and introduce the discrete poly-Laplace
operator and some important lemmas. In Section \ref{sec:Proof-of-Theorem},
we prove Theorem \ref{thm: main thm} and Theorem \ref{thm:bi-laplace and laplace}. 

\section{Preliminaries}

\subsection{Basic notations and facts of graphs}

We recall the setting of graphs. A locally finite, simple, undirected
and connected graph $G=(V,E)$ consists of the set of vertices $V$
and the set of edges $E$. We denote the space of functions on $G$
by $C(G):=\{f:V\rightarrow\mathbb{C}\}$, and let $C_{0}(G)$ be the
set of all functions on $G$ with finite support. For a finite graph
$G=(V,E)$, then $C(G)$ is a finite dimensional complex vector space
of functions defined on $G$, and equip it with the natural Hermitian
scalar product 
\[
\left\langle f,g\right\rangle _{G}=\underset{x\in G}{\sum}f(x)g(x),\,\forall f,g\in C(G).
\]
For $f\in C(G)$ and $p\geqslant1$, define the $\ell^{p}$-norm of
$f$ as 
\[
\|f\|_{\ell^{p}(G)}=\left(\underset{x\in G}{\sum}|f(x)|^{p}\right)^{\frac{1}{p}}.
\]
And let
\[
\ell^{p}(G)\coloneqq\left\{ f\in C(G):\|f\|_{\ell^{p}(G)}<+\infty\right\} 
\]
be the space of $\ell^{p}$-summable functions on $G$. For any edge
$(x,y)\in E$ define the gradient operator as
\[
\nabla_{xy}f=f(y)-f(x),\,\forall f\in C(G).
\]
The Laplace operator on $G$ is defined as 
\[
\Delta_{G}f(x)=\sum_{y\sim x}\nabla_{xy}f,\,\forall f\in C(G),
\]
which is self-adjoint with respect to the above scalar product. And
we write $\Delta$ instead of $\Delta_{G}$ when it causes no confusion.

For subgraphs $G_{1},G_{2}\subseteq G$, define the edge set of connecting
$G_{1}$ and $G_{2}$ as 
\[
E(G_{1},G_{2})\coloneqq\left\{ (x,y)\in E:x\in G_{1},y\in G_{2}\right\} .
\]
Let $\Omega$ be a finite subgraph of $G$. Give the notions of the
boundary $\delta{\Omega}$ and the closure $\bar{\Omega}$ of the
subgraph $\Omega$ as

\[
\begin{array}{c}
\delta\Omega\coloneqq\left\{ y\in V\backslash\Omega:\exists x\in\Omega,\text{s.t. }y\sim x\right\} ,\\
\bar{\Omega}\coloneqq\Omega\cup\delta\Omega.
\end{array}
\]
Denote $\delta\Omega$ by $\delta_{1}\Omega$, and for a positive
integer $l$ define the $l$-th layer boundary as
\[
\delta_{l}\Omega\coloneqq\delta(\Omega\cup\delta_{1}\Omega\cup\ldots\cup\delta_{l-1}\Omega).
\]
For $f\in C(\Omega)$, the Dirichlet Laplace operator is defined as
\[
\Delta_{\Omega}^{\mathcal{D}}f\coloneqq\Delta f^{*}|_{\Omega},
\]
where $f^{*}\in C(G)$ obtained by extending functions to be $0$
on $\Omega^{c}$. Note that $\Delta_{\Omega}^{\mathcal{D}}$ is self-adjoint
with respect to the Hermitian scalar product \cite[Proposition 2.2]{Bauer2022}.
For $f\in C(\bar{\Omega})$ and $x\in\delta\Omega$, define the discrete
outward normal derivative of $f$ at $x$ as
\[
\frac{\partial f}{\partial n}(x)\coloneqq\underset{x\sim y\in\Omega}{\sum}\nabla_{yx}f.
\]

\subsection{\label{subsec: Dirichlet poly-Laplace}The Dirichlet poly-Laplace
operator.}

In this subsection, we introduce the Dirichlet poly-Laplace operator
and give the Dirichlet poly-Laplace eigenvalue problem.
\begin{defn}
Let $\Omega$ be a finite subgraph of $G=(V,E)$, then for all functions
$f\in C(\Omega)$ define the Dirichlet poly-Laplace operator $\Delta_{\Omega}^{l,\mathcal{D}}:C(\Omega)\to C(\Omega)$
as 
\[
\Delta_{\Omega}^{l,\mathcal{D}}f\coloneqq\Delta^{l}f^{*}|_{\Omega},
\]
where $f^{*}\in C(G)$ obtained by extending functions to be $0$
on $\Omega^{c}$. In particular, we know $\Delta_{\Omega}^{1,\mathcal{D}}$
is the Dirichlet Laplace operator $\Delta_{\Omega}^{\mathcal{D}}$.
\end{defn}

\begin{lem}
\label{lem: bi-Laplace self-adjoint}Let $\Omega$ be a finite subgraph
of $G=(V,E)$, then $(-1)^{l}\Delta_{\Omega}^{l,\mathcal{D}}$ is
self-adjoint and positive definite.
\end{lem}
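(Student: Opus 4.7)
The plan is to pull every calculation back to the full-graph Laplacian $\Delta$ acting on the zero-extensions $f^{*}\in C_{0}(G)$, and to exploit the symmetry $\langle \Delta u,v\rangle_{G}=\langle u,\Delta v\rangle_{G}$ on $C_{0}(G)$ together with the Green-type identity
\[
\langle -\Delta u,u\rangle_{G}\;=\;\tfrac{1}{2}\sum_{x\sim y}|u(x)-u(y)|^{2},\qquad u\in C_{0}(G),
\]
both of which follow from one summation by parts using that $\Delta=A-D$ is given by a symmetric kernel.

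For self-adjointness, fix $f,g\in C(\Omega)$. Since $g^{*}$ vanishes outside $\Omega$,
\[
\langle \Delta_{\Omega}^{l,\mathcal{D}}f,g\rangle_{\Omega}
\;=\;\sum_{x\in\Omega}\Delta^{l}f^{*}(x)\,\overline{g^{*}(x)}
\;=\;\langle \Delta^{l}f^{*},g^{*}\rangle_{G}.
\]
Iterating the symmetry of $\Delta$ on $C_{0}(G)$ transfers all $l$ copies of $\Delta$ onto $g^{*}$, and restricting back to $\Omega$ yields $\langle f,\Delta_{\Omega}^{l,\mathcal{D}}g\rangle_{\Omega}$. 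Hence $(-1)^{l}\Delta_{\Omega}^{l,\mathcal{D}}$ is self-adjoint.

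For positivity, consider the quadratic form $Q(f):=\langle (-1)^{l}\Delta_{\Omega}^{l,\mathcal{D}}f,f\rangle_{\Omega}=\langle (-\Delta)^{l}f^{*},f^{*}\rangle_{G}$. If $l=2k$, redistributing the Laplacians gives $Q(f)=\|\Delta^{k}f^{*}\|_{\ell^{2}(G)}^{2}\geqslant 0$, with equality forcing $\Delta^{k}f^{*}\equiv 0$ on $V$. If $l=2k+1$, the same redistribution gives $Q(f)=\langle -\Delta u,u\rangle_{G}$ for $u=\Delta^{k}f^{*}$, which by the identity above equals $\tfrac{1}{2}\sum_{x\sim y}|u(x)-u(y)|^{2}\geqslant 0$; equality forces $u$ to be constant on the connected graph $G$, and since $u$ has finite support (contained in $\Omega\cup\delta_{1}\Omega\cup\cdots\cup\delta_{k}\Omega$), the constant must be zero, again giving $\Delta^{k}f^{*}\equiv 0$. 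In either parity the vanishing of $Q(f)$ reduces to the same condition $\Delta^{k}f^{*}\equiv 0$ on $V$ for some $k\geqslant 0$.

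A descent then completes the strict positivity: if $\Delta u\equiv 0$ on $V$ for some $u\in C_{0}(G)$, the identity $\langle -\Delta u,u\rangle_{G}=0$ forces $u$ constant on the connected graph $G$, and finite support then forces this constant to be zero, so $u\equiv 0$. Applied inductively to $\Delta^{k-1}f^{*},\Delta^{k-2}f^{*},\dots,f^{*}$, this yields $f^{*}\equiv 0$, i.e.\ $f\equiv 0$. The main obstacle is precisely this last descent: turning the ``constant on all of $V$'' information into ``identically zero''. It rests on the supports $\Omega\cup\delta_{1}\Omega\cup\cdots\cup\delta_{j}\Omega$ being proper subsets of $V$, which is automatic when $G$ is infinite (as in the paper's principal setting $G=\mathbb{Z}^{d}$) and is the only place where an additional hypothesis on the ambient graph would be needed.
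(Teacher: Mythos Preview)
Your argument is correct and follows the same route as the paper: lift the inner product to the ambient graph via zero-extension and invoke the self-adjointness of $\Delta_{G}$ on $C_{0}(G)$. Your treatment of positive definiteness is in fact more careful than the paper's one-line ``follows from the positive definiteness of $-\Delta_{G}$'': you spell out the even/odd redistribution and the descent, and you correctly flag the implicit hypothesis (that $G$ be infinite, or at least $\Omega\subsetneq V$) on which strictness rests.
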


\begin{proof}
For all functions $f,g\in C(\Omega)$, we have 
\[
\left\langle (-1)^{l}\Delta_{\Omega}^{l,\mathcal{D}}f,g\right\rangle _{\Omega}=\left\langle \left(-\Delta_{G}\right)^{l}f^{*}|_{\Omega},g\right\rangle _{\Omega}=\left\langle \left(-\Delta_{G}\right)^{l}f^{*},g^{\ast}\right\rangle _{V}.
\]
Since $\Delta_{G}$ is self-adjoint, then 
\[
\left\langle \left(-\Delta_{G}\right)^{l}f^{*},g^{\ast}\right\rangle _{V}=\left\langle f^{*},\left(-\Delta_{G}\right)^{l}g^{\ast}\right\rangle _{V}=\left\langle f,(-1)^{l}\Delta_{\Omega}^{l,\mathcal{D}}g\right\rangle _{\Omega}.
\]
And the positive definiteness follows from the positive definiteness
of $-\Delta_{G}$.
\end{proof}
In this paper, we consider the Dirichlet poly-Laplace eigenvalue problem
on a finite subgraph $\Omega$
\[
(-1)^{l}\Delta_{\Omega}^{l,\mathcal{D}}f=\lambda^{l}f,\;\text{in \ensuremath{\Omega}}.
\]
Since $(-1)^{l}\Delta_{\Omega}^{l,\mathcal{D}}$ is self-adjoint,
positive definite and a finite dimensional operator, then there exist
$|\Omega|$ eigenvalues (with multiplicities), all of which are positive
real numbers. We label them in increasing order 
\[
0<\lambda_{1}^{l}(\Omega)\leqslant\lambda_{2}^{l}(\Omega)\leqslant\ldots\leqslant\lambda_{|\Omega|}^{l}(\Omega).
\]
Denote the corresponding eigenfunctions by $\{\phi_{i}^{l}\}_{1\leqslant i\leqslant|\Omega|}$
respectively. 
\begin{lem}
\textup{(Domain monotonicity)} For finite subgraphs $\Omega_{1}$
and $\Omega_{2}$ of $G=(V,E)$, if $\Omega_{1}\subseteq\Omega_{2}$,
then for all $1\leqslant k\leqslant|\Omega_{1}|$
\[
\lambda_{k}^{l}(\Omega_{1})\geqslant\lambda_{k}^{l}(\Omega_{2}).
\]
\end{lem}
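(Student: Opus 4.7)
The plan is to prove this via the min-max (Courant--Fischer) characterization of eigenvalues, exploiting the fact that the Dirichlet poly-Laplace operator is built by zero-extension. Since $(-1)^l \Delta_\Omega^{l,\mathcal{D}}$ is self-adjoint and positive definite by Lemma~\ref{lem: bi-Laplace self-adjoint}, for any finite subgraph $\Omega$ we have
\[
\lambda_k^l(\Omega) = \inf_{\substack{M \subseteq C(\Omega) \\ \dim M = k}} \sup_{\substack{f \in M \\ \|f\|_{\ell^2(\Omega)} = 1}} \bigl\langle (-1)^l \Delta_\Omega^{l,\mathcal{D}} f, f \bigr\rangle_\Omega.
\]

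First, I would set up the comparison between the two Rayleigh quotients. Given $f \in C(\Omega_1)$, let $\tilde f \in C(\Omega_2)$ denote its extension by zero to $\Omega_2 \setminus \Omega_1$. The key observation is that the further zero-extension of $\tilde f$ to all of $G$ agrees with the zero-extension $f^\ast$ of $f$ to $G$; that is, $(\tilde f)^\ast = f^\ast$. Therefore by definition,
\[
\Delta_{\Omega_1}^{l,\mathcal{D}} f = \Delta^l f^\ast|_{\Omega_1}, \qquad \Delta_{\Omega_2}^{l,\mathcal{D}} \tilde f = \Delta^l f^\ast|_{\Omega_2}.
\]
Since $\tilde f$ vanishes on $\Omega_2 \setminus \Omega_1$, summing over $\Omega_2$ reduces to summing over $\Omega_1$, giving
\[
\bigl\langle (-1)^l \Delta_{\Omega_2}^{l,\mathcal{D}} \tilde f, \tilde f \bigr\rangle_{\Omega_2} = \bigl\langle (-1)^l \Delta_{\Omega_1}^{l,\mathcal{D}} f, f \bigr\rangle_{\Omega_1},
\]
and similarly $\|\tilde f\|_{\ell^2(\Omega_2)} = \|f\|_{\ell^2(\Omega_1)}$.

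Next I would lift subspaces. The linear map $\iota\colon C(\Omega_1) \to C(\Omega_2)$ sending $f \mapsto \tilde f$ is an isometric embedding that preserves the Rayleigh quotient, so for any $k$-dimensional subspace $M_1 \subseteq C(\Omega_1)$ the image $\iota(M_1)$ is a $k$-dimensional subspace of $C(\Omega_2)$ with
\[
\sup_{\substack{f \in M_1 \\ \|f\|_{\ell^2(\Omega_1)} = 1}} \bigl\langle (-1)^l \Delta_{\Omega_1}^{l,\mathcal{D}} f, f \bigr\rangle_{\Omega_1}
= \sup_{\substack{\tilde f \in \iota(M_1) \\ \|\tilde f\|_{\ell^2(\Omega_2)} = 1}} \bigl\langle (-1)^l \Delta_{\Omega_2}^{l,\mathcal{D}} \tilde f, \tilde f \bigr\rangle_{\Omega_2}.
\]
Taking the infimum in min-max over all $k$-dimensional subspaces of $C(\Omega_2)$, a larger family than $\iota(M_1)$ ranges over, yields $\lambda_k^l(\Omega_2) \leq \lambda_k^l(\Omega_1)$ as required.

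The only subtle point, which is really the whole content of the argument, is verifying that zero-extension commutes with $\Delta^l$ in the correct sense; this is immediate from $(\tilde f)^\ast = f^\ast$, so no serious obstacle is expected. The proof is essentially the standard Dirichlet-type domain monotonicity, transplanted to the poly-Laplace setting.
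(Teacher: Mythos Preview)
Your proof is correct and follows exactly the same approach as the paper, namely the min-max (Courant--Fischer) characterization of eigenvalues. The paper's own proof consists only of writing down the min-max formula \eqref{eq: k-th eigenvalue of the poly-Laplace in min-max form} and leaving the rest to the reader; you have simply supplied those suppressed details (the equality $(\tilde f)^\ast = f^\ast$, the preservation of the Rayleigh quotient under zero-extension, and the enlargement of the family of test subspaces).
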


\begin{proof}
Since the poly-Laplace $(-1)^{l}\Delta_{\Omega}^{l,\mathcal{D}}$
can be written in the min-max form 
\begin{equation}
\lambda_{k}^{l}(\Omega)=\underset{\begin{array}{c}
M\subseteq C(\Omega)\\
\text{dim}M=k
\end{array}}{\text{min}}\underset{\begin{array}{c}
f\in M\\
\|f\|_{\ell^{2}(\Omega)}=1
\end{array}}{\text{max}}\langle(-1)^{l}\Delta_{\Omega}^{l,\mathcal{D}}f,f\rangle_{\Omega}.\label{eq: k-th eigenvalue of the poly-Laplace in min-max form}
\end{equation}
\end{proof}
\begin{defn}
For an infinite-dimensional Hilbert space $\mathcal{H}$, the spectrum
of a bounded linear operator $T:\mathcal{H}\to\mathcal{H}$ is defined
as 
\[
\sigma(T)\coloneqq\left\{ \lambda\in\mathbb{C}:\lambda I-T\text{ not invertible}\right\} .
\]
And $\text{inf }\sigma(T)$ is the bottom of the spectrum of $T$.
\end{defn}

For an infinite graph $G=(V,E)$, the $k$-th eigenvalue of the poly-Laplace
on $G$ is defined as
\begin{equation}
\lambda_{k}^{l}(G)=\underset{\begin{array}{c}
M\subseteq C_{0}(V)\\
\text{dim}M=k
\end{array}}{\text{inf}}\underset{\begin{array}{c}
f\in M\\
\|f\|_{\ell^{2}(G)}=1
\end{array}}{\text{max}}\langle(-\Delta)^{l}f,f\rangle_{G}.\label{eq: k-th eigenvalue of the poly-Laplace on G}
\end{equation}

\begin{fact}
On an infinite graph $G=(V,E)$ with bounded degree $\underset{x\in V}{\text{sup}}\text{ deg}x<+\infty$,
the poly-Laplace $(-\Delta)^{l}$ is a bounded linear operator from
$\ell^{2}(G)$ to $\ell^{2}(G)$, then

(1) $\lambda_{k}^{l}(G)\subseteq\sigma((-\Delta)^{l})$,

(2) $\sigma((-\Delta)^{l})=\left(\sigma(-\Delta)\right)^{l}$, 

(3) $\lambda_{1}^{l}(G)=\text{inf }\sigma((-\Delta)^{l})=\left(\text{inf }\sigma(-\Delta)\right)^{l}=\left(\lambda_{1}^{1}(G)\right)^{l}$. 
\end{fact}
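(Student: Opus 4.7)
The plan is to treat the three items as direct consequences of standard bounded self\text{-}adjoint spectral theory applied to $T := -\Delta$. The bounded-degree hypothesis gives $\|(-\Delta)f\|_{\ell^{2}(G)} \leqslant 2(\sup_{x} \deg x)\|f\|_{\ell^{2}(G)}$ by Cauchy\text{-}Schwarz on the two-term defining sum, so $-\Delta$ is a bounded self-adjoint positive operator on $\ell^{2}(G)$, and hence so is $(-\Delta)^{l}$. With this in hand, the spectral theorem, the polynomial spectral mapping theorem, and the Rayleigh characterization of the bottom of the spectrum are all available.

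For (1), I would first replace the supremum in (\ref{eq: k-th eigenvalue of the poly-Laplace on G}) over finite-support subspaces of $C_{0}(V)$ by the usual min\text{-}max over $k$\text{-}dimensional subspaces of $\ell^{2}(G)$. Since $C_{0}(V)$ is dense in $\ell^{2}(G)$ and the quadratic form $f\mapsto\langle(-\Delta)^{l}f,f\rangle$ is continuous in the $\ell^{2}$ norm (because the operator is bounded), the two infima agree. Then the classical min\text{-}max theorem for bounded self\text{-}adjoint operators forces $\lambda_{k}^{l}(G)$ to be either an isolated eigenvalue of finite multiplicity lying below the essential spectrum or else equal to the bottom of the essential spectrum; in both cases it belongs to $\sigma((-\Delta)^{l})$.

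For (2), I would invoke the polynomial spectral mapping theorem for the polynomial $p(t)=t^{l}$. For the inclusion $p(\sigma(-\Delta))\subseteq\sigma((-\Delta)^{l})$, use the factorization
\[
T^{l}-\mu^{l}=(T-\mu)\bigl(T^{l-1}+\mu T^{l-2}+\cdots+\mu^{l-1}I\bigr),
\]
whose two factors commute, so non-invertibility of $T-\mu$ propagates to $T^{l}-\mu^{l}$. For the reverse inclusion, factor $T^{l}-\nu=\prod_{j=0}^{l-1}(T-\omega_{j}\nu^{1/l})$ over the $l$-th roots of unity; non-invertibility of the product forces non-invertibility of at least one factor, and positivity of $-\Delta$ (so $\sigma(-\Delta)\subseteq[0,\infty)$) lets me pick out the nonnegative root $\nu^{1/l}$. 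Statement (3) is then immediate: the Rayleigh quotient infimum on $\ell^{2}(G)$ coincides with $\lambda_{1}^{l}(G)$ by the $k=1$ case of (1), and for a bounded self\text{-}adjoint operator bounded below it equals $\inf\sigma((-\Delta)^{l})$; combining with (2) and the nonnegativity of $\sigma(-\Delta)$ gives $\lambda_{1}^{l}(G)=(\inf\sigma(-\Delta))^{l}=(\lambda_{1}^{1}(G))^{l}$. No step is genuinely deep; the main point requiring a little care is the density argument that bridges $C_{0}(V)$ and $\ell^{2}(G)$ so that the min\text{-}max value really is a spectral value of a bounded operator, after which everything follows from textbook functional analysis.
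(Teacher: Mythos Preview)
The paper states this result as a \emph{Fact} without proof, treating it as standard functional analysis; there is no argument in the paper to compare against. Your outline is correct and supplies exactly the textbook ingredients the authors are implicitly invoking: boundedness of $-\Delta$ from the degree bound, density of $C_{0}(V)$ in $\ell^{2}(G)$ to transfer the min--max definition to $\ell^{2}$, the min--max principle for bounded self-adjoint operators for (1), the polynomial spectral mapping theorem for (2), and monotonicity of $t\mapsto t^{l}$ on $[0,\infty)$ together with $\sigma(-\Delta)\subseteq[0,\infty)$ for (3).

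One small comment: in your treatment of (2), the remark about positivity letting you ``pick out the nonnegative root $\nu^{1/l}$'' is unnecessary there --- the factorization $T^{l}-\nu=\prod_{j}(T-\omega_{j})$ already shows that \emph{some} $l$-th root of $\nu$ lies in $\sigma(T)$, which is all that $\sigma(T^{l})\subseteq(\sigma(T))^{l}$ requires. Positivity is only needed in (3), where you use it to conclude $\inf\bigl((\sigma(-\Delta))^{l}\bigr)=(\inf\sigma(-\Delta))^{l}$. Otherwise the argument is clean and complete.
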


\begin{defn}
\label{def:exhaustion}Let $G=(V,E)$ be an infinite graph. A sequence
of subsets of vertices $\mathcal{W}=\left\{ {W_{i}}\right\} _{i=1}^{\infty}$
is called an exhaustion of $G$, if it satisfies 

(1) $W_{1}\subseteq W_{2}\subseteq\cdot\cdot\cdot\subseteq W_{i}\subseteq\cdot\cdot\cdot\subseteq V$, 

(2) $|W_{i}|<\infty$, for all $i=1,2,\cdot\cdot\cdot$,

(3) $V=\cup_{i=1}^{\infty}W_{i}$ .
\end{defn}

By the exhaustion method we can estimate the eigenvalues of the poly-Laplace
on $G$ as Theorem \ref{thm:exhaustion}.
\begin{proof}[Proof of Theorem \ref{thm:exhaustion}]
By the domain monotonicity $\lambda_{k}^{l}(W_{i})\geqslant\lambda_{k}^{l}(W_{i+1})$,
then the limit exists follows from the definition (\ref{eq: k-th eigenvalue of the poly-Laplace in min-max form})
and (\ref{eq: k-th eigenvalue of the poly-Laplace on G}).
\end{proof}

\subsection{The integer lattice $\mathbb{Z}^{d}$ and the Fourier transform on
$\mathbb{Z}^{d}$}

Let $\mathbb{Z}^{d}$ be the $d$-dimensional integer lattice, and
choose $S=\{e_{1},\ldots,e_{d},-e_{1},\ldots,-e_{d}\}$ as the generating
set. Here $e_{i}\in\mathbb{Z}^{d}$ is the vector whose $i$-th component
is $1$ and the rest are $0$. Thus consider $\mathbb{Z}^{d}$ as
the Cayley graph generated by $S$. For any $s\in S$, the gradient
operator $\nabla s:$ $C(\mathbb{Z}^{d})\to C(\mathbb{Z}^{d})$ can
be written for a function $f\in C(\mathbb{Z}^{d})$ as
\[
\nabla sf(x)=f(x+s)-f(x).
\]
Note that the gradient operator on $\mathbb{Z}^{d}$ satisfies the
integration by parts and commutative law as follows.
\begin{fact}
\label{fact: integration by parts and commutative law}\textup{(Integration
by parts)} For all $1\leq i\leq d$ and $f,g\in C_{0}(\mathbb{Z}^{d})$,
we have 
\[
\langle\nabla_{e_{i}}f,g\rangle_{\mathbb{Z}^{d}}=\langle f,\nabla_{-e_{i}}g\rangle_{\mathbb{Z}^{d}}.
\]

\textup{(Commutative law)} For all $1\leq i,j\leq d$ and $f\in C(\mathbb{Z}^{d})$,
we have 
\[
\nabla_{e_{i}}\nabla_{e_{j}}f=\nabla_{e_{j}}\nabla_{e_{i}}f.
\]
\end{fact}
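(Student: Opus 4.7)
The plan is to prove both assertions by completely direct computation on finite sums, invoking finite support only where strictly needed.

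For the integration by parts identity, I would expand the left-hand side from the definitions of the scalar product and the gradient:
\[
\langle \nabla_{e_i} f, g \rangle_{\mathbb{Z}^d} = \sum_{x\in\mathbb{Z}^d} \bigl(f(x+e_i) - f(x)\bigr)\, g(x) = \sum_{x} f(x+e_i)\, g(x) \;-\; \sum_{x} f(x)\, g(x).
\]
Since $f,g\in C_0(\mathbb{Z}^d)$ are finitely supported, both sums are finite sums, and so the substitution $y = x+e_i$ is a legitimate bijection on $\mathbb{Z}^d$ with no boundary contribution. After relabeling $y\mapsto x$ this produces
\[
\sum_{x} f(x)\, g(x-e_i) - \sum_{x} f(x)\, g(x) = \sum_{x} f(x) \bigl(g(x+(-e_i)) - g(x)\bigr) = \langle f, \nabla_{-e_i} g\rangle_{\mathbb{Z}^d},
\]
which is the claim.

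For the commutative law, I would simply unfold both sides and observe they equal the same four-term expression. Using the definition twice,
\[
\nabla_{e_i}\nabla_{e_j} f(x) = \nabla_{e_i}\bigl(f(x+e_j) - f(x)\bigr) = f(x+e_i+e_j) - f(x+e_i) - f(x+e_j) + f(x),
\]
and the analogous computation of $\nabla_{e_j}\nabla_{e_i}f(x)$ gives the same expression, which is manifestly symmetric in $i$ and $j$. Thus equality holds pointwise for every $f\in C(\mathbb{Z}^d)$, with no finite-support assumption required.

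The only substantive point is ensuring the index shift in the integration by parts step is valid; this is why the statement imposes $f,g\in C_0(\mathbb{Z}^d)$, which reduces the infinite sums to finite ones and makes translation by $e_i$ an unambiguous bijection. Beyond that verification, neither part presents any obstacle: both are standard translation-invariance properties of finite-difference operators on the abelian group $\mathbb{Z}^d$, and the proof is essentially a one-line change of variables followed by a one-line expansion.
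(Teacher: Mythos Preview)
Your argument is correct. The paper states this result as a Fact without supplying any proof, so there is no paper proof to compare against; your direct change-of-variables for the integration by parts and pointwise expansion for the commutativity are exactly the standard verifications one would expect, and they are complete as written.
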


Next, we introduce the Fourier transform on $\mathbb{Z}^{d}$. For
$z\in[-\pi,\pi]^{d}$, we define $h_{z}(x):\mathbb{Z}^{d}\rightarrow\mathbb{C}$
as $h_{z}(x)=e^{i\langle x,z\rangle}$, where $\langle x,z\rangle=\sum_{i=1}^{d}x_{i}z_{i}$.
Thus, for all $f\in\ell^{1}(\mathbb{Z}^{d})$, the Fourier transform
on $\mathbb{Z}^{d}$ is defined by 
\[
\begin{aligned}\mathscr{F}:\quad\mathbb{C}^{\mathbb{Z}^{d}} & \rightarrow\ \mathbb{C}^{[-\pi,\pi]^{d}}\\
f\ \  & \mapsto\ \ \widehat{f},
\end{aligned}
\]
and 
\[
\widehat{f}(z)=\sum_{x\in\mathbb{Z}^{d}}e^{-i\langle x,z\rangle}f(x)=\langle f,h_{z}\rangle_{\mathbb{Z}^{d}},
\]
\[
f(x)=(2\pi)^{d}\int_{[-\pi,\pi]^{d}}e^{i\langle x,z\rangle}\widehat{f}(z)dz.
\]
Let $\Omega$ be a finite subgraph of $\mathbb{Z}^{d}$, and write
$\Phi(z)=\sum_{i=1}^{d}(2-2\cos z_{i})$ for $z\in[-\pi,\pi]^{d}$.
Then we prove several critical lemmas as follows.
\begin{lem}
Let $l$ be a positive integer, then for all $f\in C_{0}(\mathbb{Z}^{d})$
we have 
\[
\widehat{(-\Delta)^{l}f}(z)=(\Phi(z))^{l}\widehat{f}(z),\:\forall z\in[-\pi,\pi]^{d}.
\]
\end{lem}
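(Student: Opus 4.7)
The plan is to prove the identity by induction on $l$, with the base case $l=1$ reducing to the well-known fact that the characters $h_z(x)=e^{i\langle x,z\rangle}$ are formal eigenfunctions of $-\Delta$ with eigenvalue $\Phi(z)$.

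For the base case $l=1$, I would first compute $-\Delta h_z$ directly on $\mathbb{Z}^d$. Writing $-\Delta = \sum_{i=1}^d \nabla_{-e_i}\nabla_{e_i}$ (which follows by expanding the neighbor sum), and noting that $\nabla_{e_i}h_z(x) = (e^{iz_i}-1)h_z(x)$, one gets
\[
-\Delta h_z(x) = \sum_{i=1}^d(e^{iz_i}-1)(e^{-iz_i}-1)h_z(x) = \sum_{i=1}^d(2-2\cos z_i)h_z(x) = \Phi(z)h_z(x).
\]
Then, since $\widehat{f}(z)=\langle f,h_z\rangle_{\mathbb{Z}^d}$ and $f\in C_0(\mathbb{Z}^d)$, I can move $-\Delta$ from $f$ to $h_z$ using Fact~\ref{fact: integration by parts and commutative law}: applying the integration by parts formula once for each direction $e_i$ inside the decomposition $-\Delta=\sum_i \nabla_{-e_i}\nabla_{e_i}$ gives
\[
\widehat{(-\Delta)f}(z) = \langle -\Delta f, h_z\rangle_{\mathbb{Z}^d} = \langle f, -\Delta h_z\rangle_{\mathbb{Z}^d} = \Phi(z)\,\widehat{f}(z).
\]
The manipulation is legitimate even though $h_z\notin\ell^2(\mathbb{Z}^d)$, because all sums involved are finite when one argument lies in $C_0(\mathbb{Z}^d)$.

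For the inductive step, I observe that $(-\Delta)^l$ maps $C_0(\mathbb{Z}^d)$ into itself (since the Laplacian enlarges the support by at most distance one at each application). So assuming $\widehat{(-\Delta)^l f}(z)=(\Phi(z))^l\widehat{f}(z)$, I apply the base case to the function $g:=(-\Delta)^l f\in C_0(\mathbb{Z}^d)$ and obtain
\[
\widehat{(-\Delta)^{l+1}f}(z) = \widehat{(-\Delta)g}(z) = \Phi(z)\,\widehat{g}(z) = (\Phi(z))^{l+1}\widehat{f}(z).
\]

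There is no real obstacle here; the only subtle point is the use of integration by parts with a character $h_z$ that is merely bounded rather than summable, and this is entirely handled by the assumption $f\in C_0(\mathbb{Z}^d)$, which makes every sum a finite sum and lets Fact~\ref{fact: integration by parts and commutative law} apply term by term.
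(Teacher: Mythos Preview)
Your proof is correct and follows essentially the same route as the paper: both argue by induction on $l$, peeling off one $-\Delta$ at a time via integration by parts (self-adjointness) and using that the characters $h_z$ are eigenfunctions of $-\Delta$ with eigenvalue $\Phi(z)$. The paper presents the induction more tersely as a single chain of equalities, while you spell out the base case and inductive step separately and justify more carefully why the integration by parts with the non-summable $h_z$ is legitimate, but the substance is identical.
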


\begin{proof}
By the definition of Fourier transform, for all $z\in[-\pi,\pi]^{d}$,
we have
\begin{align*}
\widehat{(-\Delta)^{l}f}(z) & =\sum_{x\in\mathbb{Z}^{d}}e^{-i\langle x,z\rangle}(-\Delta)^{l}f(x)\\
 & =\sum_{x\in\mathbb{Z}^{d}}(-\Delta)e^{-i\langle x,z\rangle}(-\Delta)^{l-1}f(x)\\
 & =\Phi(z)\sum_{x\in\mathbb{Z}^{d}}e^{-i\langle x,z\rangle}(-\Delta)^{l-1}f(x)\\
 & =\left(\Phi(z)\right)^{l}\sum_{x\in\mathbb{Z}^{d}}e^{-i\langle x,z\rangle}f(x)=(\Phi(z))^{l}\widehat{f}(z).
\end{align*}
\end{proof}
\begin{lem}
Let $\Omega$ be a finite subgraph of $\mathbb{Z}^{d}$. For all $f\in C(\Omega)$,
the following equalities holds:
\begin{align}
\langle f,f\rangle_{\Omega} & =\frac{1}{(2\pi)^{d}}\int_{[-\pi,\pi]^{d}}|\langle f,h_{z}\rangle_{\Omega}|^{2}dz,\label{eq: Plancherel formula}\\
\langle f,(-1)^{l}\Delta_{\Omega}^{l,\mathcal{D}}f\rangle_{\Omega} & =\frac{1}{(2\pi)^{d}}\int_{[-\pi,\pi]^{d}}(\Phi(z))^{l}|\langle f,h_{z}\rangle_{\Omega}|^{2}dz.\label{eq: bi-Laplace fourier eq}
\end{align}
\end{lem}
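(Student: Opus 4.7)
The plan is to prove both identities by extending $f\in C(\Omega)$ to $f^{\ast}\in C_0(\mathbb{Z}^d)$ by zero outside $\Omega$ and applying Fourier/Plancherel theory on $\mathbb{Z}^d$. The key observation is that for every $z\in[-\pi,\pi]^d$,
\[
\widehat{f^{\ast}}(z)=\sum_{x\in\mathbb{Z}^d} e^{-i\langle x,z\rangle} f^{\ast}(x)=\sum_{x\in\Omega} f(x)\,e^{-i\langle x,z\rangle}=\langle f,h_z\rangle_{\Omega},
\]
so both right-hand sides of the lemma are rewritings of standard $L^2$-quantities of $\widehat{f^{\ast}}$ on $[-\pi,\pi]^d$.

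For (\ref{eq: Plancherel formula}), I would just invoke the classical Plancherel identity on $\mathbb{Z}^d$, namely $\sum_{x\in\mathbb{Z}^d}|f^{\ast}(x)|^2=\frac{1}{(2\pi)^d}\int_{[-\pi,\pi]^d}|\widehat{f^{\ast}}(z)|^2\,dz$, which is immediate from the orthogonality of $\{h_z\}$ on the torus. Since $f^{\ast}$ vanishes outside $\Omega$, the left-hand side equals $\langle f,f\rangle_\Omega$, giving the claim. For (\ref{eq: bi-Laplace fourier eq}), I would first unfold the Dirichlet inner product as an inner product on the whole lattice:
\[
\langle f,(-1)^{l}\Delta_{\Omega}^{l,\mathcal{D}}f\rangle_{\Omega}
=\sum_{x\in\Omega} f(x)\,\overline{(-\Delta)^{l}f^{\ast}(x)}
=\langle f^{\ast},(-\Delta)^{l}f^{\ast}\rangle_{\mathbb{Z}^d},
\]
where the second equality uses that $f^{\ast}\equiv 0$ outside $\Omega$, so the sum over $\mathbb{Z}^d\setminus\Omega$ contributes nothing even though $(-\Delta)^{l}f^{\ast}$ itself may be supported on a strictly larger set.

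Now I would apply Parseval together with the previous lemma to move $(-\Delta)^l$ through the Fourier transform:
\[
\langle f^{\ast},(-\Delta)^{l}f^{\ast}\rangle_{\mathbb{Z}^d}
=\frac{1}{(2\pi)^d}\int_{[-\pi,\pi]^d}\widehat{f^{\ast}}(z)\,\overline{\widehat{(-\Delta)^{l}f^{\ast}}(z)}\,dz
=\frac{1}{(2\pi)^d}\int_{[-\pi,\pi]^d}(\Phi(z))^{l}\,|\widehat{f^{\ast}}(z)|^2\,dz,
\]
where in the last step I use that $\Phi(z)=\sum_{i=1}^{d}(2-2\cos z_i)$ is real-valued, so $(\Phi(z))^{l}$ passes out of the complex conjugate. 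Substituting $\widehat{f^{\ast}}(z)=\langle f,h_z\rangle_{\Omega}$ yields (\ref{eq: bi-Laplace fourier eq}).

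There is no real obstacle here beyond bookkeeping. The only subtlety worth highlighting is the step $\langle f,(-1)^{l}\Delta_{\Omega}^{l,\mathcal{D}}f\rangle_{\Omega}=\langle f^{\ast},(-\Delta)^{l}f^{\ast}\rangle_{\mathbb{Z}^d}$: one must distinguish between $(-\Delta)^{l}f^{\ast}$ (computed on $\mathbb{Z}^d$, possibly supported on $\Omega\cup\delta_1\Omega\cup\cdots\cup\delta_l\Omega$) and its restriction $\Delta_{\Omega}^{l,\mathcal{D}}f$ to $\Omega$; the equality of inner products holds precisely because the first factor $f^{\ast}$ kills all contributions outside $\Omega$. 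Once this is noted, everything else is a direct application of Plancherel on $\mathbb{Z}^d$ and the eigen-relation $\widehat{(-\Delta)^{l}f^{\ast}}=\Phi^{l}\widehat{f^{\ast}}$ established in the preceding lemma.
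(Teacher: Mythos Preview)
Your proof is correct. Both you and the paper handle (\ref{eq: Plancherel formula}) the same way, by a direct appeal to Plancherel on $\mathbb{Z}^d$ after identifying $\widehat{f^{\ast}}(z)=\langle f,h_z\rangle_{\Omega}$.

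For (\ref{eq: bi-Laplace fourier eq}) the routes diverge. You invoke the preceding lemma $\widehat{(-\Delta)^{l}f^{\ast}}=\Phi^{l}\widehat{f^{\ast}}$ together with Parseval on $\mathbb{Z}^d$, which dispatches the identity in two lines once you have observed $\langle f,(-1)^{l}\Delta_{\Omega}^{l,\mathcal{D}}f\rangle_{\Omega}=\langle f^{\ast},(-\Delta)^{l}f^{\ast}\rangle_{\mathbb{Z}^d}$. The paper instead works coordinate by coordinate: it writes $2-2\cos z_{i_1}=|e^{-i\langle e_{i_1},z\rangle}-1|^{2}$, absorbs this factor into the inner product to produce $|\langle \nabla_{e_{i_1}}f^{\ast},h_z\rangle_{\mathbb{Z}^d}|^{2}$ via discrete integration by parts (Fact~\ref{fact: integration by parts and commutative law}), applies Plancherel, and then sums over all $l$-tuples $(i_1,\ldots,i_l)$ to rebuild $(-\Delta)^{l}$. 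Your argument is shorter and makes cleaner use of the multiplier lemma already proved; the paper's argument is more self-contained in that it re-derives the multiplier structure through the gradient factorisation and makes the role of integration by parts on $\mathbb{Z}^d$ explicit. Either way the content is the same.
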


\begin{proof}
The result of (\ref{eq: Plancherel formula}) follows from the Plancherel
formula. For (\ref{eq: bi-Laplace fourier eq}), letting $1\leq i_{1}\leq d$,
we compute 
\begin{align*}
 & \frac{1}{(2\pi)^{d}}\int_{[-\pi,\pi]^{d}}(2-2\text{cos}z_{i_{1}})|\langle f,h_{z}\rangle_{\Omega}|^{2}dz\\
= & \frac{1}{(2\pi)^{d}}\int_{[-\pi,\pi]^{d}}|e^{-i\langle e_{i_{1}},z\rangle}-1|^{2}|\langle f,h_{z}\rangle_{\Omega}|^{2}dz\\
= & \frac{1}{(2\pi)^{d}}\int_{[-\pi,\pi]^{d}}|\langle f,(e^{-i\langle e_{i_{1}},z\rangle}-1)h_{z}\rangle_{\Omega}|^{2}dz\\
= & \frac{1}{(2\pi)^{d}}\int_{[-\pi,\pi]^{d}}|\langle f^{\ast},\nabla_{-e_{i_{1}}}h_{z}\rangle_{\mathbb{Z}^{d}}|^{2}dz\\
= & \frac{1}{(2\pi)^{d}}\int_{[-\pi,\pi]^{d}}|\langle\nabla_{e_{i_{1}}}f^{\ast},h_{z}\rangle_{\mathbb{Z}^{d}}|^{2}dz\\
= & \langle\nabla_{e_{i_{1}}}f^{\ast},\nabla_{e_{i_{1}}}f^{\ast}\rangle_{\mathbb{Z}^{d}}=\langle f^{\ast},\nabla_{-e_{i_{1}}}\nabla_{e_{i_{1}}}f^{\ast}\rangle_{\mathbb{Z}^{d}},
\end{align*}
which follows from (\ref{eq: Plancherel formula}) and the integration
by parts, see Fact \ref{fact: integration by parts and commutative law}.
Then by summing over all coordinate directions for $1\leq i_{1},\ldots,i_{l}\leq d$
we get 
\begin{align*}
\frac{1}{(2\pi)^{d}}\int_{[-\pi,\pi]^{d}}(\Phi(z))^{l}|\langle f,h_{z}\rangle_{\Omega}|^{2}dz & =\sum_{i_{1},\ldots,i_{l}=1}^{d}\langle f^{\ast},\nabla_{-e_{i_{l}}}\nabla_{e_{i_{l}}}\cdots\nabla_{-e_{i_{1}}}\nabla_{e_{i_{1}}}f^{\ast}\rangle_{\mathbb{Z}^{d}}\\
 & =\langle f^{\ast},(-\Delta)^{l}f^{\ast}\rangle_{\mathbb{Z}^{d}}=\langle f,(-1)^{l}\Delta_{\Omega}^{l,\mathcal{D}}f\rangle_{\Omega}.
\end{align*}
\end{proof}
\begin{lem}
\label{lem: h_z}Let $\Omega$ be a finite subgraph of $\mathbb{Z}^{d}$,
then the following holds: 

\begin{equation}
\langle h_{z},h_{z}\rangle_{\Omega}=|\Omega|.\label{eq:h_=00007Bz=00007D}
\end{equation}
\begin{equation}
|\langle h_{z},(-1)^{l}\Delta_{\Omega}^{l,\mathcal{D}}\left(h_{z}|_{\Omega}\right)\rangle_{\Omega}|\leqslant(\Phi(z))^{l}|\Omega|+|\partial^{l}\Omega|,\label{eq:=00005CDelta^=00007Bl=00007D}
\end{equation}
where
\begin{align*}
|\partial^{l}\Omega| & =\underset{x\in\delta\Omega\cup\ldots\cup\delta_{l}\Omega\,}{\sum}\underset{y\in B(x,l)\cap\Omega}{\sum}|a_{xy}^{l}|\\
 & \leqslant4^{l}d^{l}\left(|\delta\Omega|+\ldots|\delta_{l}\Omega|\right),
\end{align*}
and $a_{xy}^{l}$ is defined by (\ref{eq:a_xy}). In particular, for
$p=1$ and $p=2$, the terms $|\partial^{1}\Omega|$ and $|\partial^{2}\Omega|$
can be bounded by (\ref{eq: |=00005Cpartial^=00007B1=00007D=00005COmega|})
and (\ref{eq: |=00005Cpartial^=00007B2=00007D=00005COmega|}) respectively.
\end{lem}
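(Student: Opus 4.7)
The first identity $\langle h_z, h_z\rangle_\Omega = |\Omega|$ is immediate because $|h_z(x)|^2 = |e^{i\langle x,z\rangle}|^2 = 1$ for every $x\in\Omega$, so the sum collapses to a count of vertices.

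For the second inequality, my plan is to compare the action of $(-1)^l\Delta_\Omega^{l,\mathcal{D}}$ on $h_z|_\Omega$ with the action of $(-\Delta)^l$ on $h_z$ viewed as a function on the whole lattice, where the latter is simply multiplication by $(\Phi(z))^l$. Setting $g = h_z|_\Omega$ with zero extension $g^\ast = h_z\cdot \mathbf{1}_\Omega$, I would use the already-derived representation
\[
(-1)^l\Delta_\Omega^{l,\mathcal{D}}g(x) \;=\; (-\Delta)^l g^\ast(x) \;=\; \sum_{y\in B(x,l)\cap\Omega} a_{xy}^{l}\,h_z(y), \qquad x\in\Omega,
\]
and subtract and add the missing terms coming from $y\in B(x,l)\setminus\Omega$ to produce the identity
\[
(-1)^l\Delta_\Omega^{l,\mathcal{D}}g(x) \;=\; (\Phi(z))^l h_z(x) \;-\; \sum_{y\in B(x,l)\setminus\Omega} a_{xy}^{l}\,h_z(y).
\]
Pairing with $h_z$ over $\Omega$, the first term yields exactly $(\Phi(z))^l|\Omega|$ (again because $|h_z|\equiv 1$), and the triangle inequality together with $|h_z(x)h_z(y)|=1$ reduces the problem to bounding $\sum_{x\in\Omega}\sum_{y\in B(x,l)\setminus\Omega}|a_{xy}^{l}|$.

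To identify this remainder with $|\partial^l\Omega|$, I would swap the order of summation, using the symmetry $a_{xy}^{l}=a_{yx}^{l}$ (which follows from $p_m(x,y)=p_m(y,x)$ in $\mathbb{Z}^d$ and the self-adjointness encoded in the construction of $a_{xy}^l$). The outer variable then ranges over $y\in\Omega^c$ with $B(y,l)\cap\Omega\neq\emptyset$, i.e.\ $1\leq d(y,\Omega)\leq l$. An easy induction on the definition of $\delta_k\Omega$ identifies this set with $\delta_1\Omega\cup\cdots\cup\delta_l\Omega$, giving exactly the expression for $|\partial^l\Omega|$ stated in the lemma.

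Finally, for the crude bound $|\partial^l\Omega|\leq 4^l d^l(|\delta_1\Omega|+\cdots+|\delta_l\Omega|)$, I would estimate the inner sum $\sum_{x\in B(y,l)\cap\Omega}|a_{yx}^{l}|$ uniformly in $y$. Using the explicit formula \eqref{eq:a_xy}, the bound $\deg y = 2d$ on $\mathbb{Z}^d$, and the elementary identity $\sum_{x\in\mathbb{Z}^d}p_m(y,x)=(2d)^m$ (the total number of walks of length $m$ starting at $y$), I would get
\[
\sum_{x\in B(y,l)}|a_{yx}^{l}| \;\leq\; \sum_{m=0}^{l}\binom{l}{m}(2d)^{l-m}(2d)^{m} \;=\; (4d)^{l},
\]
and summing over $y\in\delta_1\Omega\cup\cdots\cup\delta_l\Omega$ gives the claim. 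The only mildly tricky step is the boundary reindexing in paragraph two, where one must be careful that the layers $\delta_k\Omega$ in the original definition really coincide with distance-$k$ shells from $\Omega$; once that is checked, the rest is bookkeeping.
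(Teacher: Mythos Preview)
Your proof is correct and follows essentially the same strategy as the paper's: compare the Dirichlet poly-Laplacian acting on $h_z|_\Omega$ with the full operator $(-\Delta)^l$ acting on $h_z$ (which multiplies by $(\Phi(z))^l$), and bound the discrepancy by the boundary quantity $|\partial^l\Omega|$. The only organizational difference is where the splitting happens: the paper writes $\langle h_z,(-1)^l\Delta_\Omega^{l,\mathcal D}(h_z|_\Omega)\rangle_\Omega=\langle h_z,(-\Delta)^l(h_z|_\Omega)^\ast\rangle_{\mathbb Z^d}-\langle h_z,(-\Delta)^l(h_z|_\Omega)^\ast\rangle_{\Omega^c}$ and evaluates the first term via the Fourier identity $\widehat{(-\Delta)^l g^\ast}=(\Phi(z))^l\widehat{g^\ast}$, so the boundary term emerges directly as a sum over $x\in\Omega^c$; you instead use the pointwise eigenfunction relation on $\Omega$ and then swap the order of summation via the symmetry $a_{xy}^l=a_{yx}^l$. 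These are dual bookkeeping choices leading to the same $|\partial^l\Omega|$, and your $(4d)^l$ estimate via $\sum_x p_m(y,x)=(2d)^m$ is exactly the paper's row-sum bound for $(D+A)^l$ written out combinatorially.
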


\begin{proof}
The first result (\ref{eq:h_=00007Bz=00007D}) follows from 
\[
\langle h_{z},h_{z}\rangle_{\Omega}=\sum_{x\in\Omega}h_{z}(x)\overline{h_{z}}(x)=\sum_{x\in\Omega}1=|\Omega|.
\]
Note that
\[
\begin{aligned}|\langle h_{z},(-1)^{l}\Delta_{\Omega}^{l,\mathcal{D}}\left(h_{z}|_{\Omega}\right)\rangle_{\Omega}|=|\langle h_{z}^{\ast},(-\Delta)^{l}\left(h_{z}|_{\Omega}\right)^{\ast}\rangle_{\mathbb{Z}^{d}}|\end{aligned}
.
\]
Since
\[
\begin{aligned}|\langle h_{z},(-\Delta)^{l}\left(h_{z}|_{\Omega}\right)^{\ast}\rangle_{\mathbb{Z}^{d}}|= & |\widehat{(-\Delta)^{l}\left(h_{z}|_{\Omega}\right)^{\ast}}(z)|\\
= & |(\Phi(z))^{l}\widehat{\left(h_{z}|_{\Omega}\right)^{\ast}}(z)|\\
= & (\Phi(z))^{l}|\Omega|,
\end{aligned}
\]
and by the definition (\ref{eq:a_xy})
\begin{align*}
|\langle h_{z},(-\Delta)^{l}\left(h_{z}|_{\Omega}\right)^{\ast}\rangle_{\Omega^{c}}| & \leqslant\sum_{x\in\Omega^{c}}|e^{-i\langle x,z\rangle}(-\Delta)^{l}\left(h_{z}|_{\Omega}\right)^{\ast}(x)|\\
 & \leqslant\underset{x\in\delta\Omega\cup\ldots\cup\delta_{l}\Omega\,}{\sum}|\underset{y\in B(x,l)\cap\Omega}{\sum}a_{xy}^{l}h_{z}(y)|\\
 & \leqslant\underset{x\in\delta\Omega\cup\ldots\cup\delta_{l}\Omega\,}{\sum}\underset{y\in B(x,l)\cap\Omega}{\sum}|a_{xy}^{l}|\eqqcolon|\partial^{l}\Omega|.
\end{align*}
Then we can estimate the left-hand side of (\ref{eq:=00005CDelta^=00007Bl=00007D})
as
\begin{align*}
|\langle h_{z},(-1)^{l}\Delta_{\Omega}^{l,\mathcal{D}}\left(h_{z}|_{\Omega}\right)\rangle_{\Omega}| & \leqslant|\langle h_{z},(-\Delta)^{l}\left(h_{z}|_{\Omega}\right)^{\ast}\rangle_{\mathbb{Z}^{d}}|+|\langle h_{z},(-\Delta)^{l}\left(h_{z}|_{\Omega}\right)^{\ast}\rangle_{\Omega^{c}}|\\
 & \leqslant(\Phi(z))^{l}|\Omega|+|\partial^{l}\Omega|,
\end{align*}
Similar to (\ref{eq:a_xy}), we define 
\[
(D+A)^{l}f(x)=\sum_{m=0}^{l}\left(\begin{array}{c}
l\\
m
\end{array}\right)D^{l-m}A^{m}f(x)\eqqcolon\underset{y\in B(x,l)}{\sum}b_{xy}^{l}f(y),
\]
where $D$ is the degree matrix and $A$ is the adjacency matrix.
Then 
\[
\underset{y\in B(x,l)}{\sum}|a_{xy}^{l}|\leqslant\underset{y\in B(x,l)}{\sum}b_{xy}^{l}\leqslant\underset{y}{\sum}\left((D+A)^{l}\right)_{xy}\leqslant4^{l}d^{l},
\]
Hence we can get a upper estimate 
\[
|\partial^{l}\Omega|=\underset{x\in\delta\Omega\cup\ldots\cup\delta_{l}\Omega\,}{\sum}\underset{y\in B(x,l)\cap\Omega}{\sum}|a_{xy}^{l}|\leqslant4^{l}d^{l}\left(|\delta\Omega|+\ldots|\delta_{l}\Omega|\right).
\]
In particular, for $l=1$ we have
\begin{align}
|\langle h_{z},-\Delta_{\Omega}^{\mathcal{D}}\left(h_{z}|_{\Omega}\right)\rangle_{\Omega}| & =\sum_{(x,y)\in E(\Omega)}|\nabla_{xy}h_{z}|^{2}+\sum_{(x,y)\in E(\Omega,\delta\Omega)}|h_{z}(x)|^{2}\label{eq: |=00005Cpartial^=00007B1=00007D=00005COmega|}\\
 & \leqslant\Phi(z)|\Omega|+|E(\Omega,\delta\Omega)|.\nonumber 
\end{align}
And for $l=2$ we have
\begin{align*}
\sum_{x\in\Omega^{c}}|\Delta^{2}\left(h_{z}|_{\Omega}\right)^{\ast}(x)| & \leqslant\sum_{\substack{x\in\delta\Omega\\
\\
}
}|\Delta^{2}\left(h_{z}|_{\Omega}\right)^{\ast}(x)|+\sum_{\substack{x\in\delta_{2}\Omega\\
\\
}
}|\Delta^{2}\left(h_{z}|_{\Omega}\right)^{\ast}(x)|\\
 & \eqqcolon\text{\mbox{I}}+\text{\mbox{II}}.
\end{align*}
For convenience, denote $E_{1}\coloneqq|E(\delta\Omega,\Omega)|,E_{2}\coloneqq|E(\delta\Omega)|,E_{3}\coloneqq|E(\delta\Omega,\delta_{2}\Omega)|$.
Then
\begin{align*}
\text{\mbox{I}} & =\sum_{\substack{x\in\delta\Omega\\
\\
}
}|\sum_{\substack{y\sim x\\
\\
}
}\left(\Delta\left(h_{z}|_{\Omega}\right)^{\ast}(y)-\Delta\left(h_{z}|_{\Omega}\right)^{\ast}(x)\right)|\\
 & \leqslant\sum_{\substack{(x,y)\in E(\delta\Omega,\Omega)\\
\\
}
}+\sum_{\substack{(x,y)\in E(\delta\Omega)\\
\\
}
}+\sum_{\substack{(x,y)\in E(\delta\Omega,\delta_{2}\Omega)\\
\\
}
}|\Delta\left(h_{z}|_{\Omega}\right)^{\ast}(y)-\Delta\left(h_{z}|_{\Omega}\right)^{\ast}(x)|\\
 & =\sum_{\substack{(x,y)\in E(\delta\Omega,\Omega)\\
\\
}
}\left|\Delta\left(h_{z}|_{\Omega}\right)^{\ast}(y)+\frac{\partial\left(h_{z}|_{\Omega}\right)^{\ast}}{\partial n}(x)\right|+\\
 & \sum_{\substack{(x,y)\in E(\delta\Omega)\\
\\
}
}\left|-\frac{\partial\left(h_{z}|_{\Omega}\right)^{\ast}}{\partial n}(y)+\frac{\partial\left(h_{z}|_{\Omega}\right)^{\ast}}{\partial n}(x)\right|+\sum_{\substack{(x,y)\in E(\delta\Omega,\delta_{2}\Omega)\\
\\
}
}\left|\frac{\partial\left(h_{z}|_{\Omega}\right)^{\ast}}{\partial n}(x)\right|\\
 & \leqslant4dE_{1}+(E_{1}+2E_{2}+E_{3})\underset{x\in\delta\Omega}{\text{max}}\left|\frac{\partial\left(h_{z}|_{\Omega}\right)^{\ast}}{\partial n}(x)\right|.
\end{align*}
\begin{align*}
\text{\mbox{II}} & \leqslant\sum_{\substack{x\in\delta_{2}\Omega\\
\\
}
}|\sum_{\substack{y\sim x\\
\\
}
}\left(\Delta\left(h_{z}|_{\Omega}\right)^{\ast}(y)-\Delta\left(h_{z}|_{\Omega}\right)^{\ast}(x)\right)|\\
 & \leqslant\sum_{\substack{(x,y)\in E(\delta_{2}\Omega,\delta\Omega)\\
\\
}
}|\Delta\left(h_{z}|_{\Omega}\right)^{\ast}(y)|\\
 & \leqslant\sum_{\substack{(x,y)\in E(\delta_{2}\Omega,\delta\Omega)\\
\\
}
}\sum_{\substack{(y,y')\in E(\delta\Omega,\Omega)\\
\\
}
}|h_{z}(y')|\leqslant E_{3}E_{1}.
\end{align*}
Hence
\begin{equation}
|\partial^{2}\Omega|\leqslant4dE_{1}+(E_{1}+2E_{2}+E_{3})\underset{x\in\delta\Omega}{\text{max}}\left|\frac{\partial\left(h_{z}|_{\Omega}\right)^{\ast}}{\partial n}(x)\right|+E_{3}E_{1}.\label{eq: |=00005Cpartial^=00007B2=00007D=00005COmega|}
\end{equation}
\end{proof}

\section{\label{sec:Proof-of-Theorem}Estimates of Dirichlet poly-Laplace
eigenvalues }

In this section, we follow the methods in \cite{Li1983,Bauer2022,Kroger1994}
to derive the upper and lower bounds of Dirichlet poly-Laplace eigenvalues
on a finite subgraph $\Omega$ of $\mathbb{Z}^{d}$. Recall that
\[
(-1)^{l}\Delta_{\Omega}^{l,\mathcal{D}}f=\lambda^{l}f,\;\text{in \ensuremath{\Omega}}.
\]
There are $|\Omega|$ positive real eigenvalues (with multiplicities)
\[
0<\lambda_{1}^{l}\leqslant\lambda_{2}^{l}\leqslant\ldots\leqslant\lambda_{|\Omega|}^{l}.
\]
and denote the corresponding eigenfunctions by $\{\phi_{i}^{l}\}_{1\leqslant i\leqslant|\Omega|}$
respectively.

\subsection{Upper bound}

We begin with a general lemma (proved in \cite{Bauer2022}) on eigenvalues.
\begin{lem}
\label{lem: upper bound lemma}Let L be a self-adjoint, positive semidefinite
operator on a finite dimensional, Hermitian, complex vector space
$W$ with Hermitian inner product $\langle,\rangle$. Let $0\leqslant\gamma_{1}\leqslant\ldots\leqslant\gamma_{s}$
be its eigenvalues, and choose an orthonormal basis of eigenfunctions
$\{f_{i}\}_{1\leqslant i\leqslant|\Omega|}$, where $f_{i}$ corresponds
to $\gamma_{i}$. Then for any $1\leqslant k\leqslant s$ and any
vector $g\in W$ one has 
\begin{equation}
\begin{aligned}\gamma_{k+1}\langle g,g\rangle\leqslant\langle g,Lg\rangle+\sum_{j=1}^{k}(\gamma_{k+1}-\gamma_{j})|\langle g,f_{j}\rangle|^{2}.\end{aligned}
\label{eq: upper bound lemma of L}
\end{equation}
\end{lem}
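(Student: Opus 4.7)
The plan is to prove Lemma \ref{lem: upper bound lemma} by a direct spectral expansion in the orthonormal eigenbasis, followed by a telescoping rearrangement that isolates the contribution of the first $k$ eigenvectors. Since $L$ is self-adjoint and positive semidefinite on the finite-dimensional Hermitian space $W$, the eigenfunctions $\{f_i\}_{1\leqslant i\leqslant s}$ (which we can take orthonormal by the spectral theorem) form a complete basis, so any $g\in W$ admits the expansion $g=\sum_{i=1}^{s}c_i f_i$ with $c_i=\langle g,f_i\rangle$.

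First I would record the two identities coming from orthonormality and the eigenvalue equation $Lf_i=\gamma_i f_i$:
\begin{equation*}
\langle g,g\rangle=\sum_{i=1}^{s}|c_i|^{2},\qquad \langle g,Lg\rangle=\sum_{i=1}^{s}\gamma_i|c_i|^{2}.
\end{equation*}
Then the right-hand side of (\ref{eq: upper bound lemma of L}) can be rewritten by splitting the sum over $i$ into $i\leqslant k$ and $i\geqslant k+1$ and absorbing the correction terms $\sum_{j=1}^{k}(\gamma_{k+1}-\gamma_j)|c_j|^{2}$:
\begin{equation*}
\langle g,Lg\rangle+\sum_{j=1}^{k}(\gamma_{k+1}-\gamma_j)|c_j|^{2}=\sum_{j=1}^{k}\gamma_{k+1}|c_j|^{2}+\sum_{i=k+1}^{s}\gamma_i|c_i|^{2}.
\end{equation*}

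The final step is to use the monotonicity $\gamma_i\geqslant \gamma_{k+1}$ for $i\geqslant k+1$ to bound the tail sum below by $\gamma_{k+1}\sum_{i=k+1}^{s}|c_i|^{2}$, which combines with the first block to give exactly $\gamma_{k+1}\sum_{i=1}^{s}|c_i|^{2}=\gamma_{k+1}\langle g,g\rangle$, yielding the claimed inequality.

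There is essentially no obstacle here; the argument is a two-line Parseval-style calculation, and the only care needed is to ensure that the eigenbasis is chosen orthonormal (allowed because $L$ is self-adjoint) and that the rearrangement of finite sums is done correctly. I would note in passing that (\ref{eq: upper bound lemma of L}) is morally the Rayleigh--Ritz inequality written in a form convenient for applying a well-chosen trial vector $g$; in the sequel one will take $g=h_z$ for $z\in B\subset[-\pi,\pi]^{d}$ and integrate against the Fourier parameter, so the value of the lemma lies not in its proof but in the flexibility of the trial vector $g$.
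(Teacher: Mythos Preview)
Your proof is correct and is precisely the standard spectral-expansion (Rayleigh--Ritz) argument; the paper itself does not give a proof of this lemma but cites \cite{Bauer2022}, where it is proved by the same method. One small point worth making explicit is the boundary case $k=s$: there your tail sum is empty and the identity collapses to an equality for any value assigned to $\gamma_{s+1}$, consistent with the convention $\lambda_{|\Omega|+1}^{l}=0$ used later in Lemma~\ref{lem: upper bound lemma of poly-Laplace}.
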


By choosing $g=h_{z}|_{\Omega}$ and $f_{j}=\phi_{j}^{l}$, we can
derive an inequality between $\lambda^{l}{}_{k+1}$ and $\sum_{j=1}^{k}\lambda_{j}^{l}$
as follows.
\begin{lem}
\label{lem: upper bound lemma of poly-Laplace}Let $\Omega$ be a
finite subgraph of $\mathbb{Z}^{d}$. For any $1\leqslant k\leqslant|\Omega|$
and a measurable set $B\subset[-\pi,\pi]^{d}$, we have 
\[
\begin{aligned}\lambda_{k+1}^{l}(|\Omega||B|-(2\pi)^{d}k)\leqslant|\Omega|\cdot\int_{z\in B}(\Phi(z))^{l}dz-(2\pi)^{d}\sum_{j=1}^{k}\lambda_{j}^{l}+|B|\cdot|\partial^{l}\Omega|,\end{aligned}
\]
where $\Phi(z)=\sum_{i=1}^{d}(2-2\cos z_{i})$, and $\lambda_{k+1}^{l}=0$
if $k=|\Omega|$. 
\end{lem}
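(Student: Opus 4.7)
The plan is to apply Lemma \ref{lem: upper bound lemma} with $L = (-1)^{l}\Delta_{\Omega}^{l,\mathcal{D}}$ on the Hermitian space $C(\Omega)$, take eigenfunctions $f_{j} = \phi_{j}^{l}$ with eigenvalues $\gamma_{j} = \lambda_{j}^{l}$, and choose the test vector $g = h_{z}|_{\Omega}$ for each $z \in [-\pi,\pi]^{d}$. This yields the pointwise inequality
\[
\lambda_{k+1}^{l}\langle h_{z}|_{\Omega},h_{z}|_{\Omega}\rangle_{\Omega}\leqslant\langle h_{z}|_{\Omega},(-1)^{l}\Delta_{\Omega}^{l,\mathcal{D}}(h_{z}|_{\Omega})\rangle_{\Omega}+\sum_{j=1}^{k}(\lambda_{k+1}^{l}-\lambda_{j}^{l})|\langle h_{z}|_{\Omega},\phi_{j}^{l}\rangle_{\Omega}|^{2}.
\]

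Next I would integrate both sides over $z \in B$. For the left-hand side, equation (\ref{eq:h_=00007Bz=00007D}) in Lemma \ref{lem: h_z} gives $\langle h_{z}|_{\Omega},h_{z}|_{\Omega}\rangle_{\Omega}= |\Omega|$, producing $\lambda_{k+1}^{l}|\Omega||B|$. For the first term on the right-hand side, I would invoke the bound (\ref{eq:=00005CDelta^=00007Bl=00007D}) of Lemma \ref{lem: h_z}, giving
\[
\int_{B}\langle h_{z}|_{\Omega},(-1)^{l}\Delta_{\Omega}^{l,\mathcal{D}}(h_{z}|_{\Omega})\rangle_{\Omega}\,dz\leqslant|\Omega|\int_{B}(\Phi(z))^{l}\,dz+|B|\cdot|\partial^{l}\Omega|.
\]

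For the sum term, the crucial observation is that $\lambda_{k+1}^{l}-\lambda_{j}^{l}\geqslant 0$ for $j\leqslant k$, so I can enlarge $B$ to all of $[-\pi,\pi]^{d}$ and apply the Plancherel formula (\ref{eq: Plancherel formula}) to each eigenfunction:
\[
\int_{B}|\langle\phi_{j}^{l},h_{z}\rangle_{\Omega}|^{2}\,dz\leqslant\int_{[-\pi,\pi]^{d}}|\langle\phi_{j}^{l},h_{z}\rangle_{\Omega}|^{2}\,dz=(2\pi)^{d}\|\phi_{j}^{l}\|_{\ell^{2}(\Omega)}^{2}=(2\pi)^{d},
\]
since the eigenfunctions are orthonormal. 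Summing in $j$ then bounds the integrated sum term by $(2\pi)^{d}\sum_{j=1}^{k}(\lambda_{k+1}^{l}-\lambda_{j}^{l}) = (2\pi)^{d}k\lambda_{k+1}^{l}-(2\pi)^{d}\sum_{j=1}^{k}\lambda_{j}^{l}$. Collecting everything and transferring the $(2\pi)^{d}k\lambda_{k+1}^{l}$ term to the left-hand side gives precisely the claimed inequality.

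The proof is therefore essentially mechanical once Lemma \ref{lem: upper bound lemma} and Lemma \ref{lem: h_z} are available; the only subtlety is the sign bookkeeping that justifies replacing $\int_{B}$ by $\int_{[-\pi,\pi]^{d}}$ in the eigenfunction sum (which requires $\lambda_{k+1}^{l}\geqslant\lambda_{j}^{l}$ for $j\leqslant k$), and the careful use of the boundary bound (\ref{eq:=00005CDelta^=00007Bl=00007D}) rather than a naive identity, since $h_{z}$ is not literally an eigenfunction of the Dirichlet poly-Laplace on $\Omega$. The edge case $k = |\Omega|$ is handled by the stated convention $\lambda_{k+1}^{l}=0$, under which the inequality becomes the obvious bound on $\sum_{j=1}^{|\Omega|}\lambda_{j}^{l}$.
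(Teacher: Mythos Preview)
Your proposal is correct and follows essentially the same route as the paper: apply Lemma~\ref{lem: upper bound lemma} with $g=h_{z}|_{\Omega}$, integrate over $B$, use Lemma~\ref{lem: h_z} for both the norm and the operator term, and enlarge $B$ to $[-\pi,\pi]^{d}$ in the eigenfunction sum via Plancherel (exploiting $\lambda_{k+1}^{l}\geqslant\lambda_{j}^{l}$). Your explicit justification of the domain enlargement and the edge case $k=|\Omega|$ are, if anything, slightly more careful than the paper's presentation.
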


\begin{proof}
In Lemma~\ref{lem: upper bound lemma}, we choose 
\[
W=\mathbb{C}^{\Omega},\quad L=(-1)^{l}\Delta_{\Omega}^{l,\mathcal{D}},\quad g=h_{z}|_{\Omega},\quad\gamma_{j}=\lambda_{j}^{l},\quad f_{j}=\phi_{j}^{l},
\]
which satisfying the conditions of Lemma~\ref{lem: upper bound lemma}
since $(-1)^{l}\Delta_{\Omega}^{l,\mathcal{D}}$ is self-adjoint and
positive definite. Integrating both sides of the inequality (\ref{eq: upper bound lemma of L})
over a measurable set $B\subset[-\pi,\pi]^{d}$ yields 
\[
\begin{aligned}\lambda_{k+1}^{l}\int_{z\in B}\langle h_{z},h_{z}\rangle_{\Omega}\leqslant\int_{z\in B}\langle h_{z},(-1)^{l}\Delta_{\Omega}^{l,\mathcal{D}}(h_{z}|_{\Omega})\rangle_{\Omega}+\sum_{j=1}^{k}(\lambda_{k+1}^{l}-\lambda_{j}^{l})\int_{z\in B}|\langle h_{z},\phi_{j}^{l}\rangle_{\Omega}|^{2}.\end{aligned}
\]
By Lemma~\ref{lem: h_z}, the left-hand side is equal to $\lambda_{k+1}^{l}|B||\Omega|$,
and the right-hand side 
\begin{align*}
 & \leqslant\int_{z\in B}((\Phi(z))^{l}|\Omega|+|\partial^{l}\Omega|)dz+(2\pi)^{d}\sum_{j=1}^{k}(\lambda_{k+1}^{l}-\lambda_{j}^{l})\langle\phi_{j}^{l},\phi_{j}^{l}\rangle_{\Omega}\\
 & =|\Omega|\cdot\int_{z\in B}(\Phi(z))^{l}dz+|B|\cdot|\partial^{l}\Omega|+(2\pi)^{d}\sum_{j=1}^{k}(\lambda_{k+1}^{l}-\lambda_{j}^{l}),
\end{align*}
which completes the proof of this lemma. 
\end{proof}
Then we can prove the upper bound estimate (a) of Theorem \ref{thm: main thm}.
\begin{proof}[Proof of Theorem \ref{thm: main thm} (a)]
 In Lemma \ref{lem: upper bound lemma of poly-Laplace}, for $1\leqslant k\leqslant\min\{1,\frac{V_{d}}{2^{d}}\}|\Omega|$,
we choose the measurable set $B$ as a ball of radius $2\pi(\frac{k}{V_{d}|\Omega|})^{\frac{1}{d}}$
centered at the origin in $\mathbb{R}^{d}$, so that the radius $R=2\pi(\frac{k}{V_{d}|\Omega|})^{\frac{1}{d}}\leqslant\pi$
and $|B|=\frac{k(2\pi)^{d}}{|\Omega|}$. Thus we derive an inequality
\begin{equation}
\begin{aligned}(2\pi)^{d}\sum_{j=1}^{k}\lambda_{j}^{l}\leqslant|\Omega|\cdot\int_{z\in B}(\Phi(z))^{l}dz+|B|\cdot|\partial^{l}\Omega|.\end{aligned}
\label{eq: upper bound 1}
\end{equation}
In the rest of the proof, we need to estimate $\int_{z\in B}(\Phi(z))^{l}dz$.
And by calculation,
\[
\begin{aligned}(\Phi(z))^{l}=(\sum_{i=1}^{d}(2-2\cos z_{i}))^{l}=(\sum_{i=1}^{d}4\sin^{2}\frac{z_{i}}{2})^{l}\leqslant(\sum_{i=1}^{d}z_{i}^{2})^{l}=|z|^{2l}.\end{aligned}
\]
This yields that 
\[
\begin{aligned}\int_{z\in B}(\Phi(z))^{l}dz\leqslant\int_{z\in B}|z|^{2l}dz=dV_{d}\int_{0}^{R}r^{2l}\cdot r^{d-1}dr=\frac{dV_{d}}{d+2l}R^{2l+d}.\end{aligned}
\]
Applying the above inequality to (\ref{eq: upper bound 1}), we obtain
\[
\begin{aligned}\frac{1}{k}\sum_{j=1}^{k}\lambda_{j}^{l} & \leqslant\frac{dV_{d}}{d+2l}\frac{|\Omega|R^{d+2l}}{k(2\pi)^{d}}+\frac{|\partial^{l}\Omega|}{|\Omega|}\\
 & =(2\pi)^{2l}\frac{d}{d+2l}(\frac{k}{V_{d}|\Omega|})^{\frac{2l}{d}}+\frac{|\partial^{l}\Omega|}{|\Omega|}.
\end{aligned}
\]
Similarly, for $1\leqslant k\leqslant\min\{1,\frac{V_{d}}{2^{d+1}}\}|\Omega|$,
we choose $R'=2^{\frac{1}{d}}R$ , which implies $R'=2\pi(\frac{2k}{V_{d}|\Omega|})^{\frac{1}{d}}\leqslant\pi$
and $|B'|=\frac{2k(2\pi)^{d}}{|\Omega|}$. Then we repeat the above
proof and ignore the term $\sum_{j=1}^{k}\lambda_{j}^{l}$, obtaining
that 
\[
\begin{aligned}k(2\pi)^{d}\lambda_{k+1}^{l} & \leqslant|\Omega|\cdot\int_{z\in B'}(\Phi(z))^{l}dz+|B'|\cdot|\partial^{l}\Omega|\\
 & \leqslant|\Omega|\cdot\frac{dV_{d}}{d+2l}R'^{d+2l}+\frac{2k(2\pi)^{d}|\partial^{l}\Omega|}{|\Omega|},
\end{aligned}
\]
which implies that
\[
\lambda_{k+1}^{l}\leqslant(2\pi)^{2l}\frac{d\cdot2^{\frac{d+2l}{d}}}{d+2l}(\frac{k}{V_{d}|\Omega|})^{\frac{2l}{d}}+2\frac{|\partial^{l}\Omega|}{|\Omega|}.
\]
\end{proof}

\subsection{Lower bound}

To prove the lower bound estimate ($(b)$ of Theorem \ref{thm: main thm}),
we follow the method of Li and Yau \cite{Li1983}. 
\begin{lem}
\textup{(Modification of Lemma 1 from \cite{Li1983})} \label{lem: lower bound lemma}Let
$F$ be a real-valued function on $\mathbb{R}^{d}$ such that $0\leqslant F\leqslant M$
and 
\[
\int_{[-\pi,\pi]^{d}}F(z)dz\geqslant K.
\]
Assume
\[
(\frac{K}{MV_{d}})^{\frac{1}{d}}\leqslant\sqrt{6}<\pi.
\]
Then we have 
\[
\int_{[-\pi,\pi]^{d}}(\Phi(z))^{l}F(z)dz\geqslant\sum_{m=0}^{l}\left(\begin{array}{c}
l\\
m
\end{array}\right)\left(-\frac{1}{12}\right)^{m}\frac{dK}{d+2(l+m)}(\frac{K}{MV_{d}})^{\frac{2(l+m)}{d}}>0,
\]
where $\Phi(z)=\sum_{i=1}^{d}(2-2\cos z_{i})$. 
\end{lem}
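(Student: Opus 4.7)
The plan is to apply a radial bathtub-principle (symmetrization) argument in the spirit of Li--Yau. I will construct a radially symmetric auxiliary function $\varphi$ with $0\le\varphi\le\Phi^l$ that is non-decreasing in $|z|$, so that $\widetilde F:=M\cdot 1_{B_R}$, with $R:=(K/(MV_d))^{1/d}\le\sqrt 6$, minimizes $\int\varphi F\,dz$ over the admissible class, and $\int\varphi\widetilde F\,dz$ collapses to the desired right-hand side.

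Set $\psi(r):=r^2-r^4/12$, which increases from $0$ to $\psi(\sqrt 6)=3$ on $[0,\sqrt 6]$, and define
\[
\varphi(z):=\begin{cases}\psi(|z|)^l, & |z|\le\sqrt 6,\\ 3^l, & |z|>\sqrt 6.\end{cases}
\]
This $\varphi$ is non-negative, radial, continuous, and non-decreasing in $|z|$. The pointwise bound $\varphi\le\Phi^l$ on $[-\pi,\pi]^d$ splits into two cases. For $|z|\le\sqrt 6$ I would use the scalar inequality $2-2\cos t\ge t^2-t^4/12$ for all $t\in\mathbb R$, which follows from convexity of $g(t):=2-2\cos t-t^2+t^4/12$ together with $g(0)=g'(0)=0$ and $g''(t)=t^2-(2-2\cos t)\ge 0$ (a consequence of $\cos t\ge 1-t^2/2$); summing over coordinates and applying $\sum z_i^4\le(\sum z_i^2)^2=|z|^4$ gives $\Phi(z)\ge\psi(|z|)\ge 0$, hence $\Phi^l\ge\psi^l=\varphi$. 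For $|z|>\sqrt 6$ with $z\in[-\pi,\pi]^d$ I need the separate estimate $\Phi(z)\ge 3$, which I would prove by Lagrange multipliers: on the interior sphere $|z|^2=6$, critical points of $\Phi$ satisfy $\sin z_i=\lambda z_i$, so all nonzero coordinates share an absolute value $t$ with $kt^2=6$ and $\Phi=k(2-2\cos\sqrt{6/k})$; an elementary check of this one-parameter sequence shows the minimum is attained at $k=1$, giving $\Phi\ge 2-2\cos\sqrt 6>3$, while any face $|z_i|=\pi$ contributes at least $4$.

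We may assume $\int F=K$ (otherwise truncate $F$ to decrease the LHS, since $\Phi^l\ge 0$). From $\varphi\le\Phi^l$ we have $\int\Phi^l F\,dz\ge\int\varphi F\,dz$, and for the bathtub step set $t_0:=\varphi(R)=\psi(R)^l$. On $B_R$, $\varphi\le t_0$ and $F-M\le 0$, so $\varphi(F-M)\ge t_0(F-M)$; on $B_R^c$, $\varphi\ge t_0$ and $F\ge 0$, so $\varphi F\ge t_0 F$. Integrating and using $\int(F-\widetilde F)\,dz=0$ yields
\[
\int\varphi(F-\widetilde F)\,dz\ge t_0\int(F-\widetilde F)\,dz=0.
\]
The final step is an explicit computation: by binomial expansion and polar coordinates,
\[
\int\varphi\widetilde F\,dz=M\int_{B_R}\bigl(|z|^2-|z|^4/12\bigr)^l\,dz=M\sum_{m=0}^l\binom{l}{m}\Bigl(-\tfrac1{12}\Bigr)^m\frac{dV_d R^{d+2(l+m)}}{d+2(l+m)},
\]
which, after substituting $MV_d R^d=K$, simplifies to the stated formula. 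Strict positivity is automatic because $R\le\sqrt 6<\sqrt{12}$ forces $\psi(|z|)>0$ on $(0,R]$, so $\int_{B_R}\psi^l\,dz>0$ whenever $K>0$. The one genuinely delicate step is the Lagrange-multiplier verification of $\Phi\ge 3$ outside $B_{\sqrt 6}$; everything else is a routine Taylor inequality, the standard bathtub argument, and an explicit integral.
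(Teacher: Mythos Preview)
Your argument is correct and follows the same template as the paper: the same auxiliary function $\varphi$, the same bathtub/symmetrization step with $\widetilde F=M\cdot 1_{B_R}$, and the same explicit radial integral at the end. The one place you work harder than necessary is the bound $\Phi(z)\ge 3$ for $|z|>\sqrt 6$: the paper simply observes that $t\mapsto\Phi(tz/|z|)$ is non-decreasing for $z\in[-\pi,\pi]^d$ (since $\tfrac{d}{dt}\Phi(tz/|z|)=\sum_i\tfrac{2z_i}{|z|}\sin(tz_i/|z|)\ge 0$ when each $tz_i/|z|\in[-\pi,\pi]$), so $\Phi(z)\ge\Phi(\sqrt 6\,z/|z|)\ge\psi(\sqrt 6)=3$ immediately, bypassing your Lagrange-multiplier computation.
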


\begin{proof}
Without loss of generality, assume that $\int_{[-\pi,\pi]^{d}}F(z)dz=K$
by decreasing $F$ as necessary. Then prove the lemma in two steps.

\textbf{Step 1:} First we are going to construct a radially symmetric
function $\varphi(z):[-\pi,\pi]^{d}\rightarrow\mathbb{R}$ such that
$0\leqslant\varphi(z)\leqslant(\Phi(z))^{l}$. Since $2-2\cos x\geq x^{2}-x^{4}/12$
for all $x$. Then 
\[
\sum_{i=1}^{d}(2-2\cos z_{i})\geqslant\sum_{i=1}^{d}(z_{i}^{2}-\frac{1}{12}z_{i}^{4})\geqslant\sum_{i=1}^{d}z_{i}^{2}-\frac{1}{12}\left(\sum_{i=1}^{d}z_{i}^{2}\right)^{2}=|z|^{2}-\frac{1}{12}|z|^{4},
\]
which means for $|z|\leqslant\pi<2\sqrt{3}$
\[
(\Phi(z))^{l}=(\sum_{i=1}^{d}(2-2\cos z_{i}))^{l}\geqslant\left(|z|^{2}-\frac{1}{12}|z|^{4}\right)^{l}\eqqcolon H(|z|).
\]
Note that $H(|z|)$ is monotone increasing on $[0,\sqrt{6}]$. Then
we define 
\[
\varphi(z)=\left\{ \begin{aligned} & H(|z|),\quad & \ |z|\leqslant\sqrt{6},\\
 & H(\sqrt{6})=3^{l}, & \ \sqrt{6}<|z|\leqslant\pi.
\end{aligned}
\right.
\]
Since $\Phi(z)$ is monotone increasing in $[-\pi,\pi]^{d}$ along
each half line starting at $0$, then we get that $(\Phi(z))^{l}\geqslant\varphi(z)$
for all $z\in[-\pi,\pi]^{d}$. 

\textbf{Step 2:} Since $F\geqslant0$, we can give a lower bound of
$\int_{[-\pi,\pi]^{d}}(\Phi(z))^{l}F(z)dz$ as
\begin{equation}
\begin{aligned}\int_{[-\pi,\pi]^{d}}(\Phi(z))^{l}F(z)dz\geqslant\int_{[-\pi,\pi]^{d}}\varphi(z)F(z)dz.\end{aligned}
\end{equation}
Let $R=(\frac{K}{MV_{d}})^{\frac{1}{d}}\leqslant\sqrt{6}<\pi$ such
that $MR^{d}V_{d}=K$, and define 
\[
\widetilde{F}(z)=\left\{ \begin{aligned} & M\  & ,\ |z|\leqslant R,\\
 & \ 0 & ,\ |z|>R.
\end{aligned}
\right.
\]
Then we have
\[
\int_{[-\pi,\pi]^{d}}\widetilde{F}(z)dz=\int_{[-\pi,\pi]^{d}}F(z)dz=K.
\]
We claim that $\widetilde{F}$ minimizes the integral $\int_{[-\pi,\pi]^{d}}\varphi(z)F(z)dz$
for all functions $F$ satisfying $0\leqslant F\leqslant M$ and $\int_{[-\pi,\pi]^{d}}F(z)dz=K$.
Since $\varphi(z)$ is monotonic and radially symmetric, then
\[
\begin{aligned} & \int_{[-\pi,\pi]^{d}}\varphi(z)(F(z)-\widetilde{F}(z))dz\\
= & \int_{[-\pi,\pi]^{d}\setminus B_{R}}\varphi(z)F(z)dz-\int_{B_{R}}\varphi(z)(M-F(z))dz\\
\geqslant & \varphi(R)\int_{[-\pi,\pi]^{d}\setminus B_{R}}F(z)dz-\varphi(R)\int_{B_{R}}(M-F(z))dz\\
= & \varphi(R)(\int_{[-\pi,\pi]^{d}}F(z)dz-\int_{B_{R}}Mdz)=0.
\end{aligned}
\]
Thus we have the estimate 
\[
\begin{aligned} & \int_{[-\pi,\pi]^{d}}(\Phi(z))^{l}F(z)dz\geqslant\int_{[-\pi,\pi]^{d}}\varphi(z)F(z)dz\geqslant\int_{[-\pi,\pi]^{d}}\varphi(z)\widetilde{F}(z)dz\\
= & M\int_{B_{R}}\varphi(z)dz=MdV_{d}\int_{0}^{R}H(r)r^{d-1}dr\\
= & MdV_{d}\left(\sum_{m=0}^{l}\left(\begin{array}{c}
l\\
m
\end{array}\right)\left(-\frac{1}{12}\right)^{m}\frac{R^{d+2(l+m)}}{d+2(l+m)}\right)\\
= & \sum_{m=0}^{l}\left(\begin{array}{c}
l\\
m
\end{array}\right)\left(-\frac{1}{12}\right)^{m}\frac{dK}{d+2(l+m)}(\frac{K}{MV_{d}})^{\frac{2(l+m)}{d}},
\end{aligned}
\]
which is positive when $R=(\frac{K}{MV_{d}})^{\frac{1}{d}}\leqslant\sqrt{6}$.
\end{proof}
Then we can prove the lower bound estimate $(b)$ of Theorem \ref{thm: main thm}.
\begin{proof}[Proof of Theorem \ref{thm: main thm} (b)]
 Suppose eigenfunctions $\{\phi_{j}^{l}\}_{j=1}^{k}$ form a standardized
orthogonal basis in $\ell^{2}(\Omega)$. We define $P_{j}$ as the
projection operator on the space spanned by $\phi_{j}^{l}$, and $P$
as the projection operator on the space spanned by $\{\phi_{j}^{l}\}_{j=1}^{k}$.
Let $h_{z}(x)=e^{i\langle z,x\rangle}$ as before. To apply Lemma~\ref{lem: lower bound lemma},
let 
\[
F_{j}(z)=\langle P_{j}h_{z},P_{j}h_{z}\rangle_{\Omega}=\|P_{j}h_{z}\|_{\ell^{2}(\Omega)}^{2},
\]
\[
F(z)=\sum_{j=1}^{k}F_{j}(z)=\|Ph_{z}\|_{\ell^{2}(\Omega)}^{2}.
\]
By Lemma~\ref{lem: h_z}, we have 
\[
F(z)=\|Ph_{z}\|_{\ell^{2}(\Omega)}^{2}=\sum_{j=1}^{k}|\langle\phi_{j}^{l},h_{z}\rangle_{\Omega}|^{2}\leqslant\|h_{z}\|_{\ell^{2}(\Omega)}^{2}=|\Omega|,
\]
\[
\int_{[-\pi,\pi]^{d}}F(z)dz=\sum_{j=1}^{k}\int_{[-\pi,\pi]^{d}}|\langle\phi_{j}^{l},h_{z}\rangle_{\Omega}|^{2}dz=(2\pi)^{d}\sum_{j=1}^{k}\langle\phi_{j}^{l},\phi_{j}^{l}\rangle_{\Omega}=k(2\pi)^{d},
\]
\[
\begin{aligned}\int_{[-\pi,\pi]^{d}}(\Phi(z))^{l}F(z)dz & =\sum_{j=1}^{k}\int_{[-\pi,\pi]^{d}}(\Phi(z))^{l}|\langle\phi_{j}^{l},h_{z}\rangle_{\Omega}|^{2}dz\\
 & =(2\pi)^{d}\sum_{j=1}^{k}\langle\phi_{j}^{l},(-1)^{l}\Delta_{\Omega}^{l,\mathcal{D}}\phi_{j}^{l}\rangle_{\Omega}=(2\pi)^{d}\sum_{j=1}^{k}\lambda_{j}^{l}.
\end{aligned}
\]
By Lemma~\ref{lem: lower bound lemma} with $M=|\Omega|,K=k(2\pi)^{d}$,
we get 
\begin{align*}
\lambda_{k}^{l}(\Omega) & \geqslant\frac{1}{k}\sum_{j=1}^{k}\lambda_{j}^{l}(\Omega)\\
 & \geqslant\sum_{m=0}^{l}\left(\begin{array}{c}
l\\
m
\end{array}\right)\left(-\frac{1}{12}\right)^{m}(2\pi)^{2(l+m)}\frac{d}{d+2(l+m)}(\frac{k}{|\Omega|V_{d}})^{\frac{2(l+m)}{d}},
\end{align*}
which is positive when $k\leqslant\min\{1,(\frac{\sqrt{6}}{2\pi})^{d}V_{d}\}|\Omega|$.
\end{proof}

\subsection{Comparison between eigenvalues of l and 2l-order poly-Laplace}

Next we want to compare the eigenvalue of $\Delta_{\Omega}^{2l,\mathcal{D}}$
and the square of eigenvalue of $(-1)^{l}\Delta_{\Omega}^{l,\mathcal{D}}$.
First we prove a lemma as follows.
\begin{lem}
\label{lem:bi-laplace and laplace}Let $\Omega$ be a finite subgraph
of $G$ and $l\in\mathbb{N}_{+}$, then for all $f\in C(\Omega)$,
\[
\langle\left(\Delta_{\Omega}^{l,\mathcal{D}}\right)^{2}f,f\rangle_{\Omega}\leqslant\langle\Delta_{\Omega}^{2l,\mathcal{D}}f,f\rangle_{\Omega}.
\]
\end{lem}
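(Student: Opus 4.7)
The plan is to unwind the definition $\Delta_{\Omega}^{l,\mathcal{D}}f=\Delta^{l}f^{*}|_{\Omega}$ on both sides of the desired inequality and reduce to the obvious fact that restricting a function to a subset can only decrease its $\ell^{2}$ norm. The whole proof should be a short two-step calculation with no real obstacle; the only thing to be careful about is bookkeeping the inner product over $\Omega$ versus over $G$ when the zero extension is used.

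First I would handle the right-hand side. Since $f^{*}$ vanishes on $\Omega^{c}$, unpacking the definition gives
\[
\langle\Delta_{\Omega}^{2l,\mathcal{D}}f,f\rangle_{\Omega}=\langle\Delta^{2l}f^{*}|_{\Omega},f\rangle_{\Omega}=\langle\Delta^{2l}f^{*},f^{*}\rangle_{G}.
\]
Then, using that $\Delta$ on $G$ is self-adjoint (iterating $l$ times, which is legitimate because $f^{*}$ has finite support, so every intermediate $\Delta^{j}f^{*}$ lies in $C_{0}(G)\subset\ell^{2}(G)$), I rewrite the last expression as
\[
\langle\Delta^{2l}f^{*},f^{*}\rangle_{G}=\langle\Delta^{l}f^{*},\Delta^{l}f^{*}\rangle_{G}=\|\Delta^{l}f^{*}\|_{\ell^{2}(G)}^{2}.
\]

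Next I would handle the left-hand side. By Lemma~\ref{lem: bi-Laplace self-adjoint} the operator $(-1)^{l}\Delta_{\Omega}^{l,\mathcal{D}}$ is self-adjoint, hence so is $\Delta_{\Omega}^{l,\mathcal{D}}$ itself. Therefore
\[
\langle(\Delta_{\Omega}^{l,\mathcal{D}})^{2}f,f\rangle_{\Omega}=\langle\Delta_{\Omega}^{l,\mathcal{D}}f,\Delta_{\Omega}^{l,\mathcal{D}}f\rangle_{\Omega}=\|\Delta^{l}f^{*}|_{\Omega}\|_{\ell^{2}(\Omega)}^{2}.
\]

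Subtracting, the gap between the two sides is precisely
\[
\langle\Delta_{\Omega}^{2l,\mathcal{D}}f,f\rangle_{\Omega}-\langle(\Delta_{\Omega}^{l,\mathcal{D}})^{2}f,f\rangle_{\Omega}=\|\Delta^{l}f^{*}\|_{\ell^{2}(G)}^{2}-\|\Delta^{l}f^{*}|_{\Omega}\|_{\ell^{2}(\Omega)}^{2}=\sum_{x\in G\setminus\Omega}|\Delta^{l}f^{*}(x)|^{2}\geqslant0,
\]
which proves the inequality. The strict inequality claim in Theorem~\ref{thm:bi-laplace and laplace} for $\Omega\subseteq\mathbb{Z}^{d}$ would then come by observing that equality forces $\Delta^{l}f^{*}$ to vanish identically on $G\setminus\Omega$, so that the eigenfunction equation on $\Omega$ would extend $f^{*}$ to an $\ell^{2}$-eigenfunction of $(-\Delta)^{l}$ on $\mathbb{Z}^{d}$, contradicting Lemma~\ref{lem:eigenfunction on Z^d}; but that is the job of the next step in the paper, not of this lemma.
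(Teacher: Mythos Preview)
Your proof is correct and follows essentially the same approach as the paper: both arguments unwind the definitions, use self-adjointness to pass from $\langle(\Delta_{\Omega}^{l,\mathcal{D}})^{2}f,f\rangle_{\Omega}$ to $\|\Delta^{l}f^{*}|_{\Omega}\|_{\ell^{2}(\Omega)}^{2}$ and from $\langle\Delta_{\Omega}^{2l,\mathcal{D}}f,f\rangle_{\Omega}$ to $\|\Delta^{l}f^{*}\|_{\ell^{2}(G)}^{2}$, and then observe that restricting to $\Omega$ can only decrease the $\ell^{2}$ norm. The paper presents this as one chain of (in)equalities while you compute the two sides separately and subtract, but the content is identical.
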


\begin{proof}
For all $f\in C(\Omega)$, we have 
\begin{align*}
\langle\left(\Delta_{\Omega}^{l,\mathcal{D}}\right)^{2}f,f\rangle_{\Omega} & =\langle\Delta_{\Omega}^{l,\mathcal{D}}f,\Delta_{\Omega}^{l,\mathcal{D}}f\rangle_{\Omega}=\langle\Delta^{l}f^{\ast}|_{\Omega},\Delta^{l}f^{\ast}|_{\Omega}\rangle_{\Omega}\\
 & \leqslant\langle\Delta^{l}f^{\ast},\Delta^{l}f^{\ast}\rangle_{G}=\langle\Delta^{2l}f^{\ast},f^{\ast}\rangle_{G}\\
 & =\langle\Delta^{2l}f^{\ast}|_{\Omega},f\rangle_{\Omega}=\langle\Delta_{\Omega}^{2l,\mathcal{D}}f,f\rangle_{\Omega}.
\end{align*}
\end{proof}
The following lemma states that there is no $\ell^{2}$-eigenfunction
of poly-Laplace on $\mathbb{Z}^{d}$. 
\begin{lem}
\label{lem:eigenfunction on Z^d}For $l\in\mathbb{N}_{+}$ and $\lambda^{l}>0$,
if $f\in\ell^{2}(\mathbb{Z}^{d})$ satisfies
\begin{equation}
(-\Delta)^{l}f(x)=\lambda^{l}f(x)\;\text{on \ensuremath{\mathbb{Z}^{d}}}.\label{eq:eigenfunction on Z^d}
\end{equation}
Then $f\equiv0$.
\end{lem}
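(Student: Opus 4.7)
The plan is to move the eigenvalue equation to the Fourier side, where it turns into a pointwise multiplication equation, and then exploit real-analyticity of the symbol.

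First I would observe that $f \in \ell^{2}(\mathbb{Z}^{d})$ implies $\widehat{f} \in L^{2}([-\pi,\pi]^{d})$ by Plancherel. Applying the Fourier transform to \eqref{eq:eigenfunction on Z^d} and invoking the identity $\widehat{(-\Delta)^{l}f}(z) = (\Phi(z))^{l}\widehat{f}(z)$ already established in the paper, the equation becomes
\[
\bigl((\Phi(z))^{l} - \lambda^{l}\bigr)\,\widehat{f}(z) = 0 \quad \text{for a.e.\ } z \in [-\pi,\pi]^{d}.
\]
Thus $\widehat{f}$ must vanish almost everywhere off the level set $E \coloneqq \{z \in [-\pi,\pi]^{d} : (\Phi(z))^{l} = \lambda^{l}\}$.

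Next I would argue that $|E| = 0$. Since $(\Phi(z))^{l} \geqslant 0$, the equation $(\Phi(z))^{l} = \lambda^{l}$ reduces to $\Phi(z) = c$ for at most one constant $c \geqslant 0$. Now $\Phi(z) = \sum_{i=1}^{d}(2 - 2\cos z_{i})$ is non-constant and real-analytic on $\mathbb{R}^{d}$, and its gradient $\nabla\Phi(z) = (2\sin z_{1},\ldots,2\sin z_{d})$ vanishes only on the finite set of points in $[-\pi,\pi]^{d}$ with every coordinate in $\{0,\pm\pi\}$. Away from this finite critical set, the implicit function theorem shows that $\{\Phi = c\}$ is a smooth $(d-1)$-dimensional submanifold of $\mathbb{R}^{d}$, which has zero $d$-dimensional Lebesgue measure; the finitely many critical points contribute nothing. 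Hence $|E| = 0$.

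Combining the two steps yields $\widehat{f} \equiv 0$ in $L^{2}([-\pi,\pi]^{d})$, and Plancherel (or Fourier inversion) then forces $f \equiv 0$ on $\mathbb{Z}^{d}$. The only subtle point is the measure-zero assertion for $E$; a slicker alternative uses the general fact that the zero set of a non-constant real-analytic function on a connected open set has Lebesgue measure zero, applied directly to $\Phi - c$. Either route makes this step routine, so no serious obstacle is expected.
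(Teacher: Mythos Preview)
Your argument is correct and follows the same Fourier-transform route as the paper: pass to the frequency side, observe that $\widehat{f}$ is supported on the level set $\{(\Phi)^{l}=\lambda^{l}\}$, and conclude since that set has Lebesgue measure zero. The only difference is that you supply an explicit justification (via $\nabla\Phi$ and the implicit function theorem, or real-analyticity) for the measure-zero claim, which the paper simply asserts.
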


\begin{proof}
Since $\ell^{2}(\mathbb{Z}^{d})$ is the completion of $C_{0}(\mathbb{Z}^{d})$
in $\ell^{2}$ norm. Assume that there exists a function $f\in C_{0}(\mathbb{Z}^{d})$
satisfying (\ref{eq:eigenfunction on Z^d}). By the Fourier transform
of both sides of (\ref{eq:eigenfunction on Z^d}), we have 
\[
(\Phi(z))^{l}\hat{f}(z)=\lambda^{l}\hat{f}(z)\text{ on }[-\pi,\pi]^{d}.
\]
Then 
\[
\left\{ z\in[-\pi,\pi]^{d}|\,\hat{f}(z)\neq0\right\} \subseteq\left\{ z\in[-\pi,\pi]^{d}|\,(\Phi(z))^{l}=\lambda^{l}\right\} ,
\]
which is a set of Lebesgue measure zero. Hence $\hat{f}(z)\overset{a.e.}{=}0$,
and then $f(x)=0$ on $\mathbb{Z}^{d}$.
\end{proof}
Then we are ready to prove that the eigenvalues of $\Delta_{\Omega}^{2l,\mathcal{D}}$
are at least as large as the squares of the eigenvalues of $(-1)^{l}\Delta_{\Omega}^{l,\mathcal{D}}$. 
\begin{proof}[Proof of Theorem \ref{thm:bi-laplace and laplace}]
By the Min-Max-Theorem and Lemma \ref{lem:bi-laplace and laplace},
we know 
\begin{align*}
\left(\lambda_{k}^{l}\right)^{2} & =\underset{\begin{array}{c}
M\subseteq C(\Omega)\\
\text{dim}M=k
\end{array}}{\text{min}}\underset{\begin{array}{c}
f\in M\\
\|f\|_{\ell^{2}}=1
\end{array}}{\text{max}}\langle(-1)^{l}\Delta_{\Omega}^{l,\mathcal{D}}f,f\rangle_{\Omega}^{2}\\
 & \leqslant\underset{\begin{array}{c}
M\subseteq C(\Omega)\\
\text{dim}M=k
\end{array}}{\text{min}}\underset{\begin{array}{c}
f\in M\\
\|f\|_{\ell^{2}}=1
\end{array}}{\text{max}}\langle\Delta_{\Omega}^{l,\mathcal{D}}f,\Delta_{\Omega}^{l,\mathcal{D}}f\rangle_{\Omega}\langle f,f\rangle_{\Omega}\\
 & =\underset{\begin{array}{c}
M\subseteq C(\Omega)\\
\text{dim}M=k
\end{array}}{\text{min}}\underset{\begin{array}{c}
f\in M\\
\|f\|_{\ell^{2}}=1
\end{array}}{\text{max}}\langle\left(\Delta_{\Omega}^{l,\mathcal{D}}\right)^{2}f,f\rangle_{\Omega}\\
 & \leqslant\underset{\begin{array}{c}
M\subseteq C(\Omega)\\
\text{dim}M=k
\end{array}}{\text{min}}\underset{\begin{array}{c}
f\in M\\
\|f\|_{\ell^{2}}=1
\end{array}}{\text{max}}\langle\Delta_{\Omega}^{2l,\mathcal{D}}f,f\rangle_{\Omega}=\lambda_{k}^{2l},
\end{align*}
Moreover, if $\Omega$ is a finite subgraph of $\mathbb{Z}^{d}$,
and the equality $\left(\lambda_{k}^{l}\right)^{2}=\lambda_{k}^{2l}$
holds, then 
\[
\lambda_{k}^{2l}=\underset{\begin{array}{c}
f\in M_{0}\\
\|f\|_{\ell^{2}}=1
\end{array}}{\text{max}}\langle\Delta_{\Omega}^{2l,\mathcal{D}}f,f\rangle_{\Omega}=\underset{\begin{array}{c}
f\in M_{0}\\
\|f\|_{\ell^{2}}=1
\end{array}}{\text{max}}\langle(-1)^{l}\Delta_{\Omega}^{l,\mathcal{D}}f,f\rangle_{\Omega}^{2}=\left(\lambda_{k}^{l}\right)^{2}.
\]
Suppose $f_{0}\in M_{0}$ such that $\|f_{0}\|_{\ell^{2}}=1$ and
\[
\left(\lambda_{k}^{l}\right)^{2}=\langle(-1)^{l}\Delta_{\Omega}^{l,\mathcal{D}}f_{0},f_{0}\rangle_{\Omega}^{2}.
\]
Then
\begin{align*}
\left(\lambda_{k}^{l}\right)^{2} & =\langle(-\Delta)^{l}f_{0}^{\ast},f_{0}^{\ast}\rangle_{\mathbb{Z}^{d}}^{2}\leqslant\langle(-\Delta)^{l}f_{0}^{\ast},(-\Delta)^{l}f_{0}^{\ast}\rangle_{\mathbb{Z}^{d}}\langle f_{0}^{\ast},f_{0}^{\ast}\rangle_{\mathbb{Z}^{d}}\\
 & =\langle(-\Delta)^{2l}f_{0}^{\ast},f_{0}^{\ast}\rangle_{\mathbb{Z}^{d}}=\langle\Delta_{\Omega}^{2l,\mathcal{D}}f_{0},f_{0}\rangle_{\Omega}\leqslant\lambda_{k}^{2l},
\end{align*}
where the first inequality follows from the Cauchy-Schwarz inequality.
Since both sides of the above formula are equal, then there is a positive
constant $\mu$ such that $(-\Delta)^{l}f_{0}^{\ast}=\mu f_{0}^{\ast}$.
By Lemma \ref{lem:eigenfunction on Z^d} we have $f_{0}^{\ast}\equiv0,$
which contradicts to $\|f_{0}\|_{\ell^{2}}=1$.
\end{proof}
We also have the same estimate in the continuous settings. 
\begin{thm}
\label{thm: continuous laplace and bi-laplace}Let $\Omega$ be a
connected bounded domain with smooth boundary in $\mathbb{R}^{d}$
and $\nu$ be the outward unit normal vector field of $\partial\Omega$.
For a positive integer $l$, if $\lambda_{k}^{il}$ ($i=1,2$) are
the $k$-th eigenvalues of the Dirichlet poly-Laplace problems
\[
\begin{cases}
\begin{array}{c}
(-\Delta)^{il}f=\lambda^{il}f,\\
f=\frac{\partial f}{\partial\nu}=\ldots=\frac{\partial^{il-1}f}{\partial^{il-1}\nu}=0,
\end{array} & \begin{array}{c}
\text{in }\Omega,\\
\text{on }\partial\Omega,
\end{array}\end{cases}
\]
respectively. Then 
\[
\left(\lambda_{k}^{l}\right)^{2}\leqslant\lambda_{k}^{2l}.
\]
\end{thm}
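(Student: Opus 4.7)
The plan is to mirror the strategy used for the discrete Theorem \ref{thm:bi-laplace and laplace}: combine a pointwise Cauchy--Schwarz type inequality on the quadratic forms with the Courant--Fischer Min-Max characterization of eigenvalues. First I would record the variational formulas
\[
\lambda_{k}^{l}=\min_{\substack{M\subset H_{0}^{l}(\Omega)\\ \dim M=k}}\max_{\substack{f\in M\\ \|f\|_{L^{2}}=1}}\langle(-\Delta)^{l}f,f\rangle,
\]
\[
\lambda_{k}^{2l}=\min_{\substack{M\subset H_{0}^{2l}(\Omega)\\ \dim M=k}}\max_{\substack{f\in M\\ \|f\|_{L^{2}}=1}}\langle(-\Delta)^{2l}f,f\rangle,
\]
where the form domains $H_{0}^{l}(\Omega)$ and $H_{0}^{2l}(\Omega)$ encode the prescribed Dirichlet boundary conditions. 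The crucial observation is the inclusion $H_{0}^{2l}(\Omega)\subset H_{0}^{l}(\Omega)$, which says that the admissible test subspaces for the $2l$-order problem are also admissible for the $l$-order one.

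Next I would prove the continuous analogue of Lemma \ref{lem:bi-laplace and laplace}: for every $f\in H_{0}^{2l}(\Omega)$ with $\|f\|_{L^{2}}=1$ one has the pointwise-in-$f$ estimate $\langle(-\Delta)^{l}f,f\rangle^{2}\leq\langle(-\Delta)^{2l}f,f\rangle$. The identity $\langle(-\Delta)^{2l}f,f\rangle=\|(-\Delta)^{l}f\|_{L^{2}}^{2}$ follows by integrating by parts $l$ times, with all boundary terms vanishing thanks to $f,\partial_{\nu}f,\ldots,\partial_{\nu}^{2l-1}f|_{\partial\Omega}=0$; then the Cauchy--Schwarz inequality yields $\langle(-\Delta)^{l}f,f\rangle^{2}\leq\|(-\Delta)^{l}f\|_{L^{2}}^{2}\|f\|_{L^{2}}^{2}$, and the estimate follows.

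To conclude, I would let $M_{0}\subset H_{0}^{2l}(\Omega)$ be a $k$-dimensional subspace attaining the minimum defining $\lambda_{k}^{2l}$. Since $M_{0}\subset H_{0}^{l}(\Omega)$, it is admissible in the variational formula for $\lambda_{k}^{l}$, so $\lambda_{k}^{l}\leq\max_{f\in M_{0},\,\|f\|_{L^{2}}=1}\langle(-\Delta)^{l}f,f\rangle$. Because the quadratic form $\langle(-\Delta)^{l}f,f\rangle$ is non-negative, squaring commutes with the maximum over unit vectors, and applying the pointwise inequality above yields
\[
(\lambda_{k}^{l})^{2}\leq\max_{\substack{f\in M_{0}\\ \|f\|_{L^{2}}=1}}\langle(-\Delta)^{l}f,f\rangle^{2}\leq\max_{\substack{f\in M_{0}\\ \|f\|_{L^{2}}=1}}\langle(-\Delta)^{2l}f,f\rangle=\lambda_{k}^{2l}.
\]

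The main obstacle is not algebraic but regularity bookkeeping: one must verify that the operators are realized as the Friedrichs self-adjoint extensions with form domains $H_{0}^{l}(\Omega)$ and $H_{0}^{2l}(\Omega)$ so that Courant--Fischer applies directly, and that no boundary contributions survive the $l$ integrations by parts producing $\langle(-\Delta)^{2l}f,f\rangle=\|(-\Delta)^{l}f\|_{L^{2}}^{2}$. For $\Omega$ bounded with smooth boundary both facts are standard and follow by density of $C_{c}^{\infty}(\Omega)$ in the respective Sobolev spaces. Unlike the $\mathbb{Z}^{d}$ case treated earlier, a strict inequality is not asserted here, and indeed one should not expect it, since there is no analogue of Lemma \ref{lem:eigenfunction on Z^d} forcing strictness in general bounded domains.
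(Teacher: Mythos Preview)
Your proposal is correct and follows essentially the same route as the paper: both combine the Courant--Fischer min-max formulas with the Cauchy--Schwarz inequality $\langle(-\Delta)^{l}f,f\rangle^{2}\le\|(-\Delta)^{l}f\|_{L^{2}}^{2}\|f\|_{L^{2}}^{2}$ and the identification $\langle(-\Delta)^{2l}f,f\rangle=\|(-\Delta)^{l}f\|_{L^{2}}^{2}$, together with the inclusion of form domains. The only cosmetic difference is that the paper carries the infimum over $k$-dimensional subspaces of $C_{c}^{\infty}(\Omega)$ throughout (invoking density in $H_{0}^{l}$ and $H_{0}^{2l}$), whereas you fix a minimizing subspace $M_{0}\subset H_{0}^{2l}(\Omega)$ for $\lambda_{k}^{2l}$ and test it in the $l$-order problem; the content is the same.
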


\begin{proof}
Recall
\[
H_{0}^{il}(\Omega)=\left\{ f:\|f\|_{H^{il}}=\underset{|\alpha|\leqslant il}{\sum}\|D^{\alpha}f\|_{L^{2}}<+\infty,\,D^{\alpha}f|_{\partial\Omega}=0\text{ for }|\alpha|\leqslant il-1\right\} ,
\]
and note that $H_{0}^{il}(\Omega)$ is the completion of $C_{0}^{\infty}(\Omega)$
in $H^{il}$ norm. Then
\begin{align*}
\left(\lambda_{k}^{l}\right)^{2} & =\underset{\begin{array}{c}
M\subseteq H_{0}^{l}(\Omega)\\
\text{dim}M=k
\end{array}}{\text{inf}}\underset{\begin{array}{c}
f\in M\\
\|f\|_{L^{2}}=1
\end{array}}{\text{sup}}\langle\nabla^{l}f,\nabla^{l}f\rangle_{\Omega}^{2}\\
 & =\underset{\begin{array}{c}
M\subseteq C_{0}^{\infty}(\Omega)\\
\text{dim}M=k
\end{array}}{\text{inf}}\underset{\begin{array}{c}
f\in M\\
\|f\|_{L^{2}}=1
\end{array}}{\text{sup}}\langle(-\Delta)^{l}f,f\rangle_{\Omega}^{2}\\
 & \leqslant\underset{\begin{array}{c}
M\subseteq C_{0}^{\infty}(\Omega)\\
\text{dim}M=k
\end{array}}{\text{inf}}\underset{\begin{array}{c}
f\in M\\
\|f\|_{L^{2}}=1
\end{array}}{\text{sup}}\langle(-\Delta)^{l}f,(-\Delta)^{l}f\rangle_{\Omega}\langle f,f\rangle_{\Omega}\\
 & =\underset{\begin{array}{c}
M\subseteq H_{0}^{2l}(\Omega)\\
\text{dim}M=k
\end{array}}{\text{inf}}\underset{\begin{array}{c}
f\in M\\
\|f\|_{L^{2}}=1
\end{array}}{\text{sup}}\langle(-\Delta)^{l}f,(-\Delta)^{l}f\rangle_{\Omega}=\lambda_{k}^{2l}.
\end{align*}
\end{proof}

\section{Appendix\label{sec:appendix}}

The following numerical experiment illustrates that on the path graph
$[0,n]\subseteq\mathbb{Z}$, the ratio of the squares of discrete
Dirichlet Laplace eigenvalues and Dirichlet bi-Laplace eigenvalues
in Remark \ref{rem: bilaplace estimate} (3) approximates the continuous
case $(0,n)\subseteq\mathbb{R}$, that is,
\[
\frac{\left(\lambda_{k}^{1}([0,n])\right)^{2}}{\lambda_{k}^{2}([0,n])}\to c_{k}=\frac{\left(\lambda_{k}^{1}((0,1))\right)^{2}}{\lambda_{k}^{2}((0,1))}<1\text{ as }n\to+\infty,
\]
suggesting that the definition (\ref{eq: main poly-laplace Dirichlet eigenvalue problem})
is reasonable. 
\begin{center}
\includegraphics[scale=0.45]{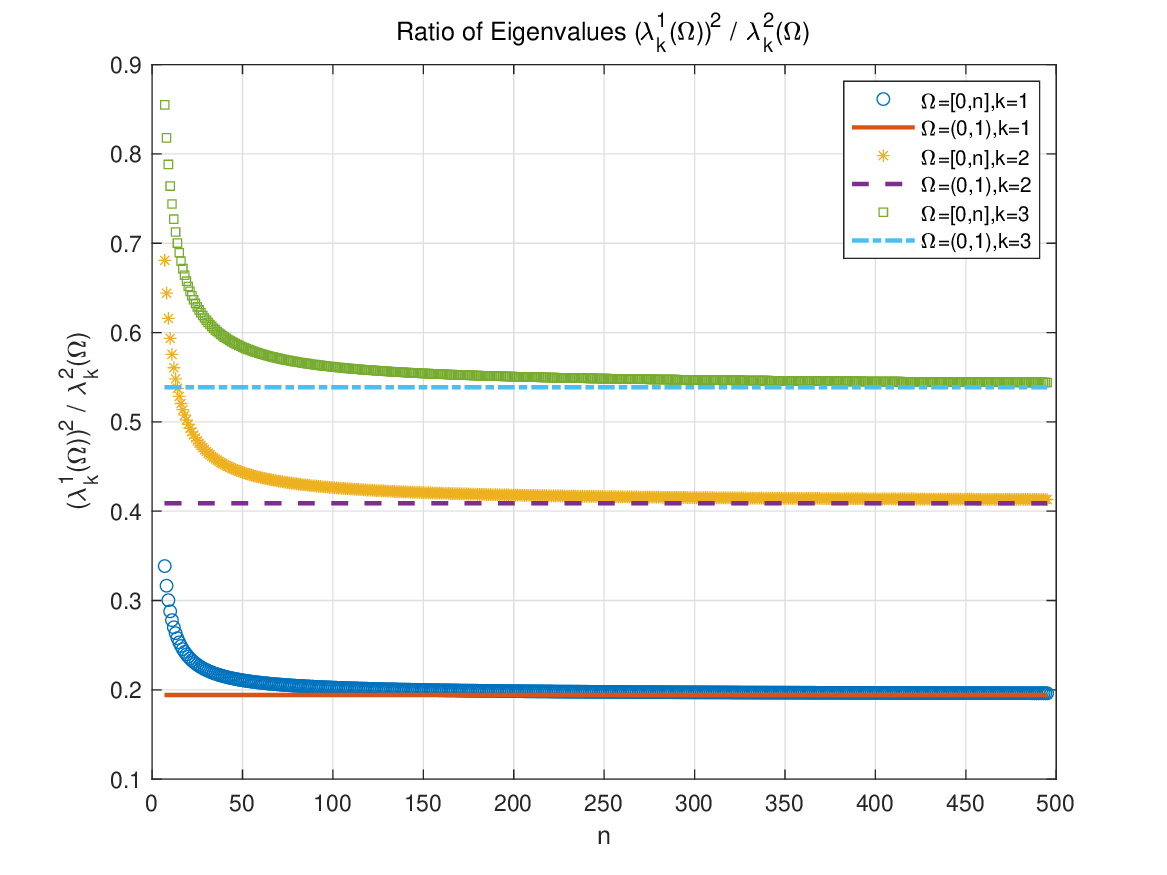}
\par\end{center}

\begin{center}
\textbf{Figure 1.} This figure shows the convergence of ratio of eigenvalues.
\par\end{center}

\textbf{Acknowledgements.} The authors would like to thank Florentin
Münch for helpful discussions. B. Hua is supported by NSFC, No. 12371056,
and by Shanghai Science and Technology Program {[}Project No. 22JC1400100{]}.

\textbf{Conflicts of Interests.} The authors declared no potential
conflicts of interests with respect to this article.

\textbf{Ethics Approval.} This study did not involve any human participants
or animals, and therefore, ethical approval was not required.

\textbf{Data Availability.} Data availability is not applicable to
this article as no new data were created or analyzed in this study. 

\bibliographystyle{plain}
\bibliography{../poly-laplace_ref}

\end{document}